\newcommand{\mbR}{\mathbb{R}}
\newcommand{\mbZ}{\mathbb{Z}}
\newcommand{\mbQ}{\mathbb{Q}}
\def\mbF{\mathbb{F}}
\def\mbP{\mathbb{P}}
\newcommand{\<}{\leq}
\def\>{\geq}
\def\vphi{\varphi}
\newcommand{\lrd}{\lfloor}
\newcommand{\rrd}{\rfloor}
\newcommand{\num}{\equiv}
\def\mcO{\mathcal{O}}
\def\msH{\mathscr{H}}
\def\msM{\mathscr{M}}
\def\injective{\hookrightarrow}
\newcommand{\rtmap}{\dashrightarrow}
\newtheorem{theorem}{Theorem}[section]
\newtheorem{lemma}[theorem]{Lemma}
\newtheorem{proposition}[theorem]{Proposition}
\newtheorem{corollary}[theorem]{Corollary}
\theoremstyle{remark}
\newtheorem{remark}[theorem]{Remark}
\theoremstyle{definition}
\newtheorem{definition}[theorem]{Definition}
\theoremstyle{definition}
\numberwithin{equation}{section}
\def\div{\operatorname{div}}
\def\dim{\operatorname{dim}}
\def\codim{\operatorname{codim}}
\def\chr{\operatorname{char}}
\def\Spec{\operatorname{Spec}}
\author{Omprokash Das}
\address{School of Mathematics\\
Tata Institute of Fundamental Research\\
Homi Bhabha Road, Navy Nagar\\
Colaba, Mumbai 400005}
\email{omprokash@gmail.com, omdas@math.tifr.res.in}
\date{}
\begin{document}
\title[Kawamata-Viehweg Vanishing Theorem]{Kawamata-Viehweg Vanishing Theorem for del Pezzo Surfaces over imperfect fields of characteristic $p>3$}

\thanks{The author was partially supported by the AMS-Simons Travel Grant Award.}
\keywords{Kodaira vanishing, Kawamata-Viehweg, del Pezzo, weak del Pezzo, singular del Pezzo, imperfect fields}
\subjclass[2010]{14E30, 14F17, 14J45}
\maketitle
\begin{abstract}
	In this article we prove that the Kawamata-Viehweg vanishing theorem holds for regular del Pezzo surfaces over imperfect ground fields of characteristic $p>3$.
\end{abstract}
\tableofcontents

\section{Introduction}
It is well-known that the Kodaira vanishing fails in positive characteristic. First counterexample was constructed by Raynaud on a smooth projective surface over algebraically closed field of every positive characteristic \cite{Ray78}. Further such counterexamples were studied in \cite{DI87, Eke88}, \cite[Section 2.6]{Kol96} and \cite{Mad16, Muk13}. A stronger version of Kodaira vanishing in characteristic $0$ is called the Kawamata-Viehweg vanishing theorem. It is known that the Kawamata-Viehweg vanishing theorem fails for Fano varieties in positive characteristic. In \cite{LR97} Lauritzen and Rao constructed a counterexample for Fano varieties of dimension at least $6$ defined over an algebraically closed field of characteristic $2$ for which Kodaira vanishing theorem fails. Recently Totaro \cite{Tot17} showed that Kodaira vanishing fails for Fano varieties in every positive characteristic $p>0$.  Over imperfect field of characteristic $2$, Schr\"oer \cite{Sch07} and Maddock \cite{Mad16} constructed a regular (but not smooth) del Pezzo surface $X$ such that $h^1(X, \mcO_X)\neq 0$, violating the Kodaira vanishing theorem. In the same paper Maddock also asked the question whether this kind of example i.e., a regular del Pezzo surface $X$ with $h^1(X, \mcO_X)\neq 0$ exists in characteristic $p>2$ \cite[Question 5.1]{Mad16}. He speculated that such examples would not exist in characteristic $p>3$. It has been confirmed recently by Patakfalvi and Waldron \cite[Theorem 1.9]{PW17} that indeed Kodaira vanishing holds for regular del Pezzo surfaces over imperfect fields in characteristic $p>3$. On the other hand, Cascini and Tanaka showed in \cite{CT16} that the Kawamata-Viehweg vanishing theorem holds for smooth del Pezzo surfaces over algebraically closed field of arbitrary positive characteristic. In the same paper they also showed that the same vanishing theorem fails for smooth rational surfaces over algebraically closed field in every positive characteristic. Another related result along this line is by Cascini, Tanaka and Witaszek, in \cite{CTW17} they showed that the Kawamata-Viehweg vanishing theorem holds for KLT log Fano surfaces over algebraically closed field in high enough characteristic. Recently Wang and Xie showed in \cite{WX17} that the Kawamata-Viehweg vanishing theorem holds for toric surfaces over arbitrary field of characteristic $p>0$.\\
In light of the recent developments in vanishing theorems for Fano varieties in positive characteristic (see \cite[Theorem 1.9]{PW17}) it is natural to ask whether the stronger version of Kodaira vanishing, namely the Kawamata-Viehweg vanishing theorem still holds for regular del Pezzo surfaces over \emph{imperfect} fields. We answer this question affirmatively in characteristic $p>3$. In particular, we prove the following theorem.
\begin{theorem}[Theorem \ref{thm:main}]
Let $(X, \Delta\>0)$ be a projective KLT pair of dimension $2$ over an arbitrary field $k$ of characteristic $p>3$. Let $D$ be a $\mbZ$-divisor on $X$ such that $D-(K_X+\Delta)$ is nef and big. Further assume that one of the following conditions is satisfied:
\begin{enumerate}
	\item $X$ is a regular del Pezzo surface i.e., $X$ is regular and $-K_X$ is ample, or
	\item $X$ is a normal del Pezzo surface, i.e., $X$ is normal and $-K_X$ is an ample Cartier divisor, and $D\>0$ is an effective $\mbZ$-divisor.
\end{enumerate}
 Then $H^i(X, \mcO_X(D))=0$ for all $i>0$.
	\end{theorem}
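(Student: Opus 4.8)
The plan is to dispose of the top cohomology by duality and then concentrate all the work on $H^1$, where the del Pezzo hypothesis and the Patakfalvi--Waldron Kodaira vanishing theorem enter.

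\textbf{Top degree.} In both cases $X$ is Cohen--Macaulay: in (1) because $X$ is regular, and in (2) because a normal surface is $S_2$, hence CM, and $-K_X$ being an ample Cartier divisor makes $X$ Gorenstein. Thus Serre--Grothendieck duality yields $H^2(X,\mcO_X(D))\cong H^0(X,\mcO_X(K_X-D))^\vee$. Now $D-K_X=\Delta+\big(D-(K_X+\Delta)\big)$ is the sum of the effective divisor $\Delta$ and a nef and big divisor, hence is itself big; intersecting with an ample divisor $H$ gives $(K_X-D)\cdot H<0$, so $K_X-D$ is not linearly equivalent to an effective divisor and $H^0(X,\mcO_X(K_X-D))=0$. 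Therefore $H^2(X,\mcO_X(D))=0$, and everything reduces to proving $H^1(X,\mcO_X(D))=0$.

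\textbf{Reduction to a regular (weak) del Pezzo.} Set $N:=D-(K_X+\Delta)$, nef and big. By Kodaira's lemma on the surface, $N\sim_{\mbQ}A+E$ with $A$ ample and $E\geq0$; replacing $N$ by $(1-t)N+t(A+E)$ for small rational $t>0$ lets me absorb $tE$ into the boundary, so after renaming $(X,\Delta)$ stays klt and $D-(K_X+\Delta)$ becomes an \emph{ample} $\mbQ$-divisor. In case (2) the pair $(X,\Delta)$ is klt with $\Delta\geq0$ and $K_X$ Cartier, so $X$ is Gorenstein klt, i.e.\ it has Du Val singularities; the minimal resolution $\pi\colon Y\to X$ is then crepant, $Y$ is a regular weak del Pezzo surface (with $-K_Y=\pi^*(-K_X)$ nef and big), and the Du Val singularities, being rational in characteristic $p>3$, give $R^j\pi_*\mcO_Y(D_Y)=0$ for $j>0$. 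Using the effectivity of $D$ in (2) to choose the round-up $D_Y$ of $\pi^*D$ with $\pi_*\mcO_Y(D_Y)=\mcO_X(D)$, the Leray spectral sequence turns the desired vanishing into $H^1(Y,\mcO_Y(D_Y))=0$. Thus both cases reduce to Kawamata--Viehweg vanishing on a regular (weak) del Pezzo surface over the imperfect field $k$ (in case (1) we simply take $Y=X$).

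\textbf{The $H^1$ vanishing on the del Pezzo.} On the regular surface $Y$ the characteristic-$0$ proof of Kawamata--Viehweg is unavailable, since its cyclic covering trick requires Kodaira vanishing on the covers, which fails in characteristic $p$. Instead I would use the global del Pezzo positivity together with Patakfalvi--Waldron's Kodaira vanishing as the base of an induction. The base case is $H^1(Y,\mcO_Y)=0$, which follows from $R^1\pi_*\mcO_Y=0$ and Kodaira vanishing for $-K_X$ on the del Pezzo $X$. For the effective situation (case (2), and after the perturbation of case (1)) I would peel off the components $C$ of $D_Y$ one at a time via the exact sequences $0\to\mcO_Y(D_Y-C)\to\mcO_Y(D_Y)\to\mcO_Y(D_Y)|_C\to0$ and induct: by Serre duality on the curve $C$ one has $H^1\!\big(C,\mcO_Y(D_Y)|_C\big)^\vee\cong H^0\!\big(C,\omega_C\otimes\mcO_Y(-D_Y)|_C\big)$, which vanishes once $\deg_C\mcO_Y(D_Y)|_C=D_Y\cdot C>\deg_C\omega_C=(K_Y+C)\cdot C$, i.e.\ once $(D_Y-K_Y-C)\cdot C>0$; this is forced on each step by the nef--and--big--ness of $D_Y-K_Y$. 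The bookkeeping must be arranged so that $D_Y-K_Y$ stays positive on the next curve to be removed, which is exactly where the weak del Pezzo condition $-K_Y$ nef and big is indispensable.

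\textbf{Main obstacle.} The genuine difficulty is this last step over an \emph{imperfect} field in characteristic $p>3$. The geometric non-normality and non-smoothness of $X$ (and of the curves $C$) over $\bar k$ obstruct the naive adjunction and arithmetic-genus estimates, since $H^0(C,\mcO_C)$ may be a nontrivial field extension of $k$ and the Frobenius behaves wildly; one must use the hypothesis $p>3$, as in Patakfalvi--Waldron, to bound the resulting defect and keep the degree inequalities $(D_Y-K_Y-C)\cdot C>0$ valid on every component. Reconciling these imperfection pathologies with the inductive degree bounds, and checking that the round-ups introduced on the resolution in case (2) never violate them, is where the essential work lies; the formal reductions above (Serre duality, Kodaira's lemma, and relative vanishing) are routine by comparison.
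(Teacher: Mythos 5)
Your preliminary reductions (Serre duality for $H^2$, perturbation of $D-(K_X+\Delta)$ to an ample class, crepant resolution in case (2)) are fine as far as they go, but the proposal does not prove the theorem: the entire content of the statement --- the $H^1$ vanishing on the (weak) del Pezzo surface over the imperfect field --- is left as a sketch whose key inequality is false in general, and your final paragraph explicitly concedes that this is ``where the essential work lies.'' Concretely, the peeling induction needs $(D_Y-K_Y-C)\cdot C>0$ at every step, and you assert this is ``forced by the nef-and-big-ness of $D_Y-K_Y$''; it is not. A nef and big divisor can have degree zero on curves (this is exactly what separates Kawamata--Viehweg from the ample case handled by the classical argument), and the extra term $-C\cdot C$ has the wrong sign whenever $C^2\geq 0$, so the induction can get stuck. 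Moreover, in case (1) the divisor $D$ need not be effective at all --- your perturbation makes $D-(K_X+\Delta)$ ample, it does not make $D$ effective --- so there are no components of $D_Y$ to peel; one would first need a non-vanishing statement such as $H^0(X,\mcO_X(D-K_X))\neq 0$, which requires a Riemann--Roch argument that never appears in your outline.

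The paper fills exactly these holes by a different route. It runs a $(\Delta+A)$-MMP (after showing $X$ is a Mori dream space, Lemma \ref{lem:mori-dream-space}) down to a minimal model $f:X\to Y$ which is again a regular (resp.\ normal canonical) del Pezzo surface, with $f_*\mcO_X(D)\cong\mcO_Y(f_*D)$ and $R^if_*\mcO_X(D)=0$ (Proposition \ref{pro:running-mmp} and relative vanishing), so that on $Y$ the divisor $D-K_Y$ is nef and big; note this contracts rather than resolves, which is what makes $D-K_Y$ globally nef. It then proves $H^0(Y,\mcO_Y(D-K_Y))\neq 0$ by Riemann--Roch together with Kodaira vanishing for del Pezzo surfaces in $p>3$ \cite{PW17} (Lemma \ref{lem:non-vanishing}, Corollary \ref{cor:non-vanishing}); it descends to the algebraic closure using geometric normality of Gorenstein del Pezzo surfaces in $p>3$ \cite[Corollary 1.4]{PW17}; and finally, in place of any curve-by-curve induction, it uses the Frobenius-type injection $H^1(X,\mcO_X(-D'))\injective H^1(X,\mcO_X(-p^eD'))$ of \cite[Lemma 3.2]{CT16jul} applied to the effective nef and big divisor $D'\sim D-K_X$ (valid because $H^1(X,\mcO_X)=0$, again by \cite{PW17}), combined with Tanaka's vanishing $H^1(X,\mcO_X(K_X+p^eD'))=0$ for $e\gg 0$ (Lemma \ref{lem:nef-big-kv}) and Serre duality. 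That Frobenius argument is precisely what replaces the degree estimates you could not make work over an imperfect field; without it, or some genuine substitute, your outline cannot be completed.
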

Varieties over imperfect fields appear naturally in positive characteristic, even when we work only over algebraically closed fields. For example, if $f:X\to Z$ is a morphism between two varieties over an algebraically closed field $k$ of characteristic $p>0$, then the generic fiber $X_{\eta}$ of $f$ is a variety over the function field $K(Z)$ of $Z$, which is an imperfect field. On the other hand, if $X$ is a variety over an imperfect field $K$ which is finitely generated over an algebraically closed field $k$, then there exist two varieties $\mathcal{X}$ and $\mathcal{B}$ over $k$ and a morphism $f:\mathcal{X}\to\mathcal{B}$ such that the function filed of $\mathcal{B}$ is $K$ and $X$ is the generic fiber of $f$. Another motivation for studying regular varieties over imperfect fields is the following: let $f:X\to Y$ be a morphism between two varieties over algebraically closed field of positive characteristic, and $X$ is smooth. Then it is not true in general that the general fibers of $f$ are smooth varieties, counterexamples are known to exist, for example, quasi-elliptic fibration in characteristic $2$ and $3$. However, the generic fiber $X_\eta$ of $f$ is a regular variety (the local rings of $X_\eta$ are all regular local rings) and thus lots of well-known results, for example, MMP, Abundance etc. still hold on the generic fiber $X_\eta$ when $\dim X_\eta=2$ (see \cite{Tan18}).\\       
	   
	   We were informed by Fabio Bernasconi that he found a counterexample for the Kawamata-Viehweg vanishing theorem for KLT log del Pezzo surfaces over algebraically closed field $k$ of characteristic $p=3$ \cite{Ber17}.\\   
	   
{\bf Acknowledgements.} I learned about this problem in a conversation with Professor Paolo Cascini at the `Conference in Birational Geometry' at the Simons Foundation in New York (August 21-25, 2017). I am grateful to Professor Cascini for our discussion. I would also like to thank the organizers of the conference and Simons Foundation for their hospitality during the conference. My special thanks go to Professor Burt Totaro, Joe Waldron and Fabio Bernasconi for carefully reading an early draft and pointing out some mistakes.\\

\section{Preliminaries}
We will work over an arbitrary field $k$ (possibly \emph{imperfect}) in characteristic $p>0$, unless stated otherwise. A \emph{variety} $X$ is an integral separated scheme of finite type over a field $k$. A \emph{surface} is a variety of dimension $2$. A variety $X$ is called \emph{regular} if the local rings $\mcO_{X, x}$ are all regular local rings for all closed points $x\in X$. For the definitions of MMP singularities see \cite[Chapter 2]{KM98, Kol13}. By \emph{$\mbZ$-divisor} $D$ we mean an integral Weil divisor.\\

\begin{definition}\label{def:del-pezzo}
A projective surface $X$ is called a \emph{regular del Pezzo} surface if $X$ is regular and $-K_X$ is an ample divisor. $X$ is called a \emph{normal del Pezzo} surface if $X$ is normal and $-K_X$ is an ample Cartier divisor. A \emph{Mori dream space} is a $\mbQ$-factorial normal projective variety $X$ such that every $D$-MMP terminates for every $\mbQ$-divisor $D$ on $X$. \\
\end{definition}

Note that our definition of \emph{Mori dream space} is quite general and it is not same as what is traditionally been called the Mori dream space in \cite{HK00}. We work in this generality for the convenience of our proof. 

\begin{remark}\label{rmk:smoothness-vs-regularity}
	Note that over a perfect base field $k$ (e.g., algebraically closed fields, finite fields $\mbF_{p^e}$ for any $e>0$, etc.), a variety $X$ is regular if and only if it is smooth over $k$. However, if $k$ is not perfect, then these two concepts do not coincide. In general, if $X$ is smooth over $k$, then $X$ is regular.\\ 
\end{remark}

\begin{definition}
For a real number $r\in\mbR$, we define $\lrd r\rrd$ to be the \emph{largest integer} less than or equal to $r$. For a divisor $D=\sum r_iD_i$, where $r_i\in\mbR$, we define $\lrd D\rrd=\sum\lrd r_i\rrd D_i$.\\
\end{definition}

\begin{remark}\label{rmk:round-down}
 Note that, if $r\>0$, then $\lrd r\rrd\>0$. For any $n\in\mbZ$ and $r\in\mbR$, $\lrd n+r\rrd=n+\lrd r\rrd$. Furthermore, $\lrd \cdot \rrd$ is a monotonically increasing function on $\mbR$, i.e., for any two real numbers $a$ and $b$ with $a\< b$, $\lrd a\rrd\<\lrd b\rrd$ holds. We also know that, $\lrd s+t\rrd\>\lrd s\rrd+\lrd t\rrd$ for two arbitrary real numbers $s, t\in \mbR$.\\		
\end{remark}

\subsection{Serre duality for $\mbZ$-divisors on normal surface}\label{sbs:serre-duality} A normal surface $X$ is always Cohen-Macaulay, since it is $R_1$ and $S_2$. For a $\mbZ$-divisor $D$ on $X$, the associated divisorial sheaf $\mcO_X(D)$ is reflexive and thus torsion free, i.e., $S_1$,  and $S_2$; in particular $\mcO_X(D)$ is a Cohen-Macaulay sheaf. Therefore by \cite[Proposition 5.75 and Theorem 5.71]{KM98} Serre duality holds and we have 
\[H^i(X, \mcO_X(D))\cong H^{2-i}(X, \msH om(\mcO_X(D), \mcO_X(K_X)))^*,\]
for all $i\>0$. Let $U$ be the regular locus of $X$ and $\iota:U\injective X$ the open immersion. Since $X$ is normal, $\codim_X (X-U)\>2$. Then on $U$ we have $\msH om(\mcO_X(D), \mcO_X(K_X))|_U\cong\mcO_X(K_X-D)|_U$. Thus $\msH om(\mcO_X(D), \mcO_X(K_X))\cong\iota_*(\msH om(\mcO_X(D), \mcO_X(K_X))|_U)\cong \iota_*(\mcO_X(K_X-D)|_U)\cong \mcO_X(K_X-D)$, since all the sheaves involved here are reflexive and $\codim_X(X-U)\>2$. In particular, the Serre duality takes the following standard form:
\[H^i(X, \mcO_X(D))\cong H^{2-i}(X, \mcO_X(K_X-D))^*,\]
for all $i\>0$.\\

\begin{remark}\label{rmk:gorenstein}
	A scheme $X$ is Gorenstein if and only if $X$ is Cohen-Macaulay and the canonical sheaf $\omega_X$ is a line bundle (see \cite[Definition 2.58, Page 79]{Kol13}). Therefore a normal surface $X$ with $K_X$ a Cartier divisor is a Gorenstein surface. In particular, from Definition \ref{def:del-pezzo} we see that all del Pezzo surfaces are Gorenstein.\\
\end{remark}

\subsection{MMP singularities over perfect field and base change}\label{sbs:perfect-base-change} Let $(X, \Delta\>0)$ be a pair over a perfect ground field $k$. Let $\bar{k}$ be the algebraic closure of $k$. Then the extension $\bar{k}/k$ is separable, since $k$ is perfect. Let $X_{\bar{k}}$ be the base change to the algebraic closure and $\Delta_{\bar{k}}$ is the flat pullback of $\Delta$ to $X_{\bar{k}}$. We note that $X_{\bar{k}}$ is normal in this case but may have disjoint irreducible components (see \cite[Remark 2.7(1)]{GNT15}); however, if $H^0(X, \mcO_X)=k$, then $X_{\bar{k}}$ is irreducible (see \cite[Lemma 2.2]{Tan15}).\\
There exists a finite sub-extension $k'$ of $\bar{k}/k$ such that the coefficients of the local equations of $X_{\bar{k}}$ and $\Delta_{\bar{k}}$ on an open cover of $X_{\bar{k}}$ are all contained in the field $k'$; in particular, the pair $(X_{\bar{k}}, \Delta_{\bar{k}})$ is defined over the field $k'$, and thus the singularities of $(X_{\bar{k}}, \Delta_{\bar{k}})$ are same as those of $(X_{k'}, \Delta_{k'})$. Since $k'/k$ is a finite separable algebraic extension, the base-change morphism $X_{k'}\to X$ is smooth. Then by \cite[Proposition 2.15]{Kol13}, $(X_{k'}, \Delta_{k'})$ is KLT (resp. DLT, PLT, LC, terminal or canonical) if $(X, \Delta)$ is KLT (resp. DLT, PLT, LC, terminal or canonical). In particular, $(X_{\bar{k}}, \Delta_{\bar{k}})$ is KLT (resp. DLT, PLT, LC, terminal or canonical) if $(X, \Delta)$ is KLT (resp. DLT, PLT, LC, terminal or canonical).\\

\subsection{$\mbZ$-divisors on KLT surfaces}\label{sbs:klt-surface} If $(X, \Delta\>0)$ is a KLT surface pair over an arbitrary field $k$, then $X$ is $\mbQ$-factorial by \cite[Corollary 4.11]{Tan18}. So we can work freely with $\mbZ$-divisors on $X$.

\section{Lemmas and Propositions}
In this section we prove some lemmas and propositions which will be used in the next section in the proof of the main theorem. The first three lemmas and their corollaries are known to the experts, however, we could not find proper references, so we prove them here.
\begin{lemma}\label{lem:positive-base-change}
	Let $X$ be a geometrically normal proper variety defined over an arbitrary field $k$. Let $k'/k$ be a field extension and $X_{k'}=X\times_k\Spec k'$ the corresponding base change. If a $\mbQ$-Cartier $\mbQ$-divisor $D$ on $X$ is ample (resp. nef, resp. big) then its flat-pullback $D_{k'}$ to $X_{k'}$ is also ample (resp. nef, resp. big).	
\end{lemma}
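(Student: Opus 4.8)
The plan is to prove the three properties—ample, nef, big—by reducing each to a statement about intersection numbers and cohomology that is stable under flat base change. The key technical fact I would lean on throughout is that for a geometrically integral proper variety $X$ over $k$, the projection morphism $\pi\colon X_{k'}\to X$ is flat, and hence numerical data (degrees, Euler characteristics, dimensions of cohomology groups) transform predictably.

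First I would treat ampleness. Here I would invoke the cohomological criterion for ampleness (Serre): $D$ is ample if and only if for every coherent sheaf $\mcF$ on $X$ there is an $n_0$ with $H^i(X, \mcF\o\mcO_X(mD))=0$ for all $i>0$ and $m\>n_0$. The cleanest route, however, is to use flat base change for cohomology together with the fact that ampleness is detected on $X_{k'}$ by pulling back a very ample multiple: since $D$ is $\mbQ$-Cartier, replace $D$ by a positive multiple $mD$ that is an ample Cartier divisor, so $\mcO_X(mD)$ is very ample after a further multiple and defines an embedding; its flat pullback to $X_{k'}$ is the pullback of that same embedding's hyperplane class, hence ample on $X_{k'}$. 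Ampleness of a positive multiple of $D_{k'}$ gives ampleness of $D_{k'}$.

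Next, for nefness I would use the intersection-theoretic characterization: $D$ is nef if and only if $D\cdot C\>0$ for every integral curve $C\subset X$. The main point is that intersection numbers are preserved under flat base change, i.e. $D_{k'}\cdot C'=[k(C'):k'(C)]\cdot(D\cdot C)$ type formulas, so nonnegativity is inherited; more carefully, every curve $C'\subset X_{k'}$ dominates a curve $C\subset X$ under the finite morphism $\pi$, and the projection formula relates $D_{k'}\cdot C'$ to a positive multiple of $D\cdot C\>0$. For bigness I would use the volume/asymptotic-Riemann-Roch characterization: $D$ is big if and only if $h^0(X, \mcO_X(\lrd mD\rrd))$ grows like $m^{\dim X}$. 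Since $X_{k'}\to X$ is flat and $k'/k$ is a field extension, $H^0(X_{k'}, \mcO_{X_{k'}}(mD_{k'}))\cong H^0(X, \mcO_X(mD))\o_k k'$ by flat base change (and because $\mcO_X(\lrd mD\rrd)$ commutes with flat pullback), so the two $h^0$'s agree and the growth rate is preserved; since $\dim X_{k'}=\dim X$, bigness descends.

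The main obstacle will be the base-change compatibility of the divisorial sheaves themselves when $D$ is merely $\mbZ$-Weil (not Cartier): one must check that the flat pullback of $\mcO_X(D)$ agrees with $\mcO_{X_{k'}}(D_{k'})$, which requires reflexivity to be preserved and the round-down $\lrd mD\rrd$ to commute with pullback. Because $X$ is geometrically integral and $\pi$ is flat, $X_{k'}$ is integral and normal away from codimension one issues are controlled, so $\mcO_X(D)$ being reflexive ($S_2$) pulls back to a reflexive sheaf; this lets the $h^0$ identity go through outside a codimension-two locus, which does not affect global sections. The remaining care is ensuring geometric integrality so that $X_{k'}$ stays integral and these birational/cohomological invariants are literally comparable across the extension.
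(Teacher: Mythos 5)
Your ample case is essentially the paper's argument (pass to a Cartier multiple, embed by a very ample multiple, and use $H^0(X_{k'},\mcO_{X_{k'}}(mD_{k'}))=H^0(X,\mcO_X(mD))\otimes_k k'$), and your big case, via the growth rate of $h^0$ under that same flat base change identity, is a correct alternative to the paper's route (the paper instead base-changes the birational map $\phi_{|mD|}$ onto its image). The genuine problem is the nef case. Your argument rests on the claim that $\pi\colon X_{k'}\to X$ is a finite morphism and that every integral curve $C'\subset X_{k'}$ dominates a curve $C\subset X$. Both claims fail precisely for the extensions this lemma is used for: in the paper $k'=\bar{k}$ with $k$ imperfect, so $k'/k$ is an infinite algebraic extension and $\pi$ is integral but not finite (and any would-be degree $[k(C'):k(C)]$ is infinite, so no projection formula with a finite positive multiple is available); and for transcendental extensions the image of a curve need not even be a curve. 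Concretely, for $k'=k(t)$ the line $\{y=tx\}\subset\mbP^2_{k'}$ has generic point mapping to the generic point of $\mbP^2_k$, so it dominates the whole surface and there is no curve $C\subset X$ to compare intersection numbers with. As written, the nef case is therefore not proved.

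The repair is exactly what the paper does, and it costs nothing extra: reduce nef to ample. Fix an ample Cartier divisor $A$ on $X$; since $D$ is nef, $mD+A$ is ample for every $m\geq 0$, hence by the (correct) ample case $mD_{k'}+A_{k'}$ is ample on $X_{k'}$, i.e. $D_{k'}+\frac{1}{m}A_{k'}$ is an ample $\mbQ$-Cartier divisor for all $m>0$, and letting $m\to\infty$ shows $D_{k'}$ is nef. If you insist on the curve-by-curve route, you would have to first descend $C'$ to a subextension finitely generated over $k$, treat finite extensions by the projection formula, and treat the transcendental part by spreading out and using constancy of intersection numbers in flat families --- considerably more work than the limit-of-amples argument.
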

\begin{proof}
Replacing $D$ by a sufficiently divisible multiple we may assume that $D$ is Cartier. Then $mD$ is very ample for $m\gg 0$, since $D$ is ample. Therefore it gives an embedding of $X$ into a projective space: $\phi_{|mD|}:X\injective \mbP H^0(X, \mcO_X(mD))^*$. Base changing this morphism to $k'$ and noticing the fact that $H^0(X_{k'}, \mcO_{X_{k'}}(D_{k'}))=H^0(X, \mcO_X(D))\otimes_k k'$, we see that $mD_{k'}$ gives an embedding of $X_{k'}$: $\phi_{|mD_{k'}|}:X_{k'}\injective H^0(X_{k'}, \mcO_{X_{k'}}(mD_{k'}))$ for all $m\gg 0$. Therefore $D_{k'}$ is an ample divisor on $X_{k'}$.\\
	
	If $D$ is nef, then for an ample Cartier divisor $A$ on $X$, $mD+A$ is ample for all $m\>0$. Then by the previous result, $mD_{k'}+A_{k'}$ is ample on $X_{k'}$ for all $m\>0$. Thus $D_{k'}+\frac{1}{m}A_{k'}$ is an ample $\mbQ$-Cartier divisor on $X_{k'}$ for all $m>0$. Therefore by taking limit as $m\to+\infty$ we get that $D_{k'}$ a nef Cartier divisor.\\
	
	If $D$ is big, then the associated rational map $\phi_{|mD|}:X\rtmap \mbP H^0(X, \mcO_X(mD))^*$ is birational to its image for $m\gg 0$. Then by a similar argument as in the ample case we see that $D_{k'}$ is big on $X_{k'}$.\\
	
\end{proof}

\begin{lemma}[Projection Formula]\label{lem:projection-formula}
	Let $f:X\to Y$ be a proper birational morphism between two normal varieties over an arbitrary field $k$. Let $D$ be a $\mbQ$-Cartier $\mbZ$-divisor on $Y$. Then for any $f$-exceptional effective $\mbZ$-divisor $E\>0$ the following holds:
	\[
		f_*\mcO_X(\lrd f^*D\rrd+E)\cong \mcO_Y(D).
	\]	
\end{lemma}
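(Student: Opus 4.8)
The statement is local on $Y$, and both sheaves are subsheaves of the constant sheaf of rational functions; since $f$ is birational we identify $K(X)=K(Y)$. The plan is to show that over every open $V\subseteq Y$ the two spaces of sections, viewed as sets of rational functions, coincide. Explicitly, $H^0(V,\mcO_Y(D))$ consists of those $g\in K(Y)$ with $\operatorname{ord}_{P'}(g)+c_{P'}(D)\geq 0$ for every prime divisor $P'$ of $Y$ meeting $V$, where $c_{P'}(\,\cdot\,)$ denotes the coefficient along $P'$; while $H^0(f^{-1}(V),\mcO_X(\lrd f^*D\rrd+E))$ consists of those $g$ with $\operatorname{ord}_P(g)+c_P(\lrd f^*D\rrd+E)\geq 0$ for every prime divisor $P$ of $X$ meeting $f^{-1}(V)$. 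Proving these two conditions on $g$ are equivalent yields an equality of subsheaves of the constant sheaf, hence the desired canonical isomorphism.

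Next I would split the prime divisors of $X$ into $f$-exceptional ones and non-exceptional ones. For a non-exceptional $P$ (the strict transform of a prime divisor $P'$ of $Y$) I would use $\operatorname{ord}_P=\operatorname{ord}_{P'}$, the fact that $c_P(E)=0$ since $E$ is exceptional, and $c_P(\lrd f^*D\rrd)=c_{P'}(D)$: indeed the coefficient of $f^*D$ along a strict transform equals $c_{P'}(D)$, which is an integer because $D$ is a $\mbZ$-divisor, so the floor leaves it unchanged. Thus the two conditions match verbatim on non-exceptional divisors, and this already supplies the reverse implication ($X$-condition $\implies$ $Y$-condition): restricting the $X$-condition to strict transforms recovers exactly the $Y$-condition.

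The crux is the forward implication on the exceptional divisors. Here I would set $a_P=c_P(f^*D)\in\mbQ$ and argue as follows. Since $\div_Y(g)$ is principal, hence Cartier, its pullback satisfies $f^*\div_Y(g)=\div_X(g)$, so $\operatorname{ord}_P(g)+a_P=c_P\bigl(f^*(\div_Y(g)+D)\bigr)$. The $Y$-condition says $\div_Y(g)+D\geq 0$ over $V$; because the pullback of an effective $\mbQ$-Cartier divisor is effective, $f^*(\div_Y(g)+D)\geq 0$ over $f^{-1}(V)$, giving $\operatorname{ord}_P(g)+a_P\geq 0$ for every $P$. For exceptional $P$ this reads $\operatorname{ord}_P(g)\geq -a_P$; since $\operatorname{ord}_P(g)\in\mbZ$ we may round up to obtain $\operatorname{ord}_P(g)\geq -\lrd a_P\rrd$, i.e. $\operatorname{ord}_P(g)+\lrd a_P\rrd\geq 0$, and adding $c_P(E)\geq 0$ preserves the inequality. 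This is precisely the $X$-condition on exceptional divisors, completing the equivalence of the two section spaces.

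The main obstacle is this last rounding step, where the integrality of $\operatorname{ord}_P(g)$ is exactly what allows the effectivity of the $\mbQ$-divisor $f^*(\div_Y(g)+D)$ to be upgraded to effectivity after taking floors; some care is also needed to verify the two inputs it rests on, namely that pullback of an effective $\mbQ$-Cartier divisor is effective and that $\div_X(g)=f^*\div_Y(g)$ in our normal, birational setting. The non-exceptional matching and the reverse inclusion are routine once this notation is in place.
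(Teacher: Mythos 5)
Your proof is correct and is essentially the paper's own argument: both identify sections of the two sheaves with rational functions via $K(X)\cong K(Y)$, prove one inclusion by pulling back the effective $\mbQ$-Cartier divisor $D+\div_Y(g)$ and using integrality of $\div_X(g)$ to pass to the round-down (then adding $E\>0$), and prove the other by pushing forward the inequality, which is exactly your restriction to strict transforms. The only difference is bookkeeping: the paper writes the inequalities divisor-wise with $f^*$ and $f_*$, while you unpack them coefficient-by-coefficient via the exceptional/non-exceptional split.
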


\begin{proof}
	Since the question is local on the base, we may assume that $Y$ is affine. In particular, it is enough to show that $H^0(X, \mcO_X(\lrd f^*D\rrd+E))\cong H^0(Y, \mcO_Y(D))$. Note that $f^*:K(Y)\to K(X)$ is an isomorphism of function fields, since $f$ is birational.\\
	Now let $\vphi\in H^0(Y, \mcO_Y(D))$. Then $D+\div(\vphi)\>0$, and thus $f^*D+\div(f^*\vphi)\>0$. Then we have $\lrd f^*D\rrd+\div(f^*\vphi)\>0$ (see Remark \ref{rmk:round-down}), and hence $\lrd f^*D\rrd+E+\div(f^*\vphi)\>0$, since $E\>0$. In particular, $f^*\vphi\in H^0(Y, \mcO_X(\lrd f^*D\rrd+E))$. On the other hand, if $f^*\psi\in \mcO_X(\lrd f^*D\rrd+E))$, then $\lrd f^*D\rrd+E+\div(f^*\psi)\>0$; in particular, $f^*D+E+\div(f^*\psi)\>0$. Thus by applying $f_*$ we get, $D+\div(\psi)\>0$, i.e., $\psi\in H^0(Y, \mcO_Y(D))$. Therefore $H^0(X, \mcO_X(\lrd f^*D\rrd+E))\cong H^0(Y, \mcO_Y(D))$.\\ 
\end{proof}

\begin{corollary}\label{cor:projection-formula}
Let $f:X\to Y$ be a proper birational morphism between two normal varieties over an arbitrary field $k$. Let $D$ be a $\mbQ$-Cartier $\mbZ$-divisor on $X$ such that $f_*D$ is also a $\mbQ$-Cartier divisor on $Y$. Write $D=f^*f_*D+E$.\\
If $E$ is an effective $f$-exceptional $\mbQ$-divisor on $X$, then $f_*\mcO_X(D)\cong \mcO_Y(f_*D)$. 	
\end{corollary}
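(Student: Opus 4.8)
The plan is to reduce the statement to the Projection Formula of Lemma \ref{lem:projection-formula} by a rounding computation. Set $D_Y:=f_*D$, which by hypothesis is a $\mbQ$-Cartier $\mbZ$-divisor on $Y$, so that by assumption $D=f^*D_Y+aE$ with $aE\>0$ an effective $f$-exceptional $\mbQ$-divisor. Since Lemma \ref{lem:projection-formula} computes $f_*\mcO_X$ precisely for a divisor of the shape $\lrd f^*D_Y\rrd+(\text{effective integral exceptional})$, it suffices to rewrite $D$ in exactly this form.

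First I would split off the fractional part of the pullback. Writing $\langle f^*D_Y\rangle:=f^*D_Y-\lrd f^*D_Y\rrd$ for the fractional part, we get
\[
D=\lrd f^*D_Y\rrd+\big(\langle f^*D_Y\rangle+aE\big).
\]
Put $E':=\langle f^*D_Y\rangle+aE$; I claim $E'$ is an effective $f$-exceptional $\mbZ$-divisor. Effectivity is immediate, since both $\langle f^*D_Y\rangle\>0$ and $aE\>0$. It is integral because $E'=D-\lrd f^*D_Y\rrd$ is the difference of two $\mbZ$-divisors. Finally it is $f$-exceptional: $aE$ is exceptional by hypothesis, and the fractional part $\langle f^*D_Y\rangle$ is supported on the exceptional locus, because over the open set where $f$ is an isomorphism $f^*D_Y$ agrees with the $\mbZ$-divisor $D_Y$ and is therefore integral there, so non-integral coefficients of $f^*D_Y$ can occur only along $f$-exceptional divisors.

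With $D=\lrd f^*D_Y\rrd+E'$ and $E'\>0$ an $f$-exceptional $\mbZ$-divisor, I would apply Lemma \ref{lem:projection-formula} directly, taking $D_Y$ in the role of the base divisor $D$ there, to obtain
\[
f_*\mcO_X(D)=f_*\mcO_X\big(\lrd f^*D_Y\rrd+E'\big)\cong\mcO_Y(D_Y)=\mcO_Y(f_*D),
\]
which is the desired isomorphism.

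There is no serious obstacle here: the entire content is the bookkeeping in the claim that $E'=\langle f^*D_Y\rangle+aE$ is \emph{simultaneously} effective, integral, and $f$-exceptional, so that the hypotheses of Lemma \ref{lem:projection-formula} are genuinely met. The integrality of $E'$ hinges on $D$ being a $\mbZ$-divisor, while its exceptionality rests on the standard observation that pulling back a $\mbZ$-divisor introduces fractional coefficients only along exceptional divisors.
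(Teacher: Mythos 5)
Your proof is correct and takes essentially the same approach as the paper: both rewrite $D=\lrd f^*f_*D\rrd+F$ with $F\>0$ an integral $f$-exceptional divisor and then invoke Lemma \ref{lem:projection-formula}, and your $E'=\langle f^*f_*D\rangle+aE$ is literally the same divisor as the paper's $F=D-\lrd f^*f_*D\rrd$. The only difference is cosmetic: you check effectivity, integrality, and exceptionality via the fractional-part decomposition, whereas the paper deduces them from the superadditivity $\lrd s+t\rrd\>\lrd s\rrd+\lrd t\rrd$ of the round-down.
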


\begin{proof}
	We have $D=\lrd D\rrd=\lrd f^*f_*D+E\rrd\>\lrd f^*f_*D\rrd+\lrd E\rrd$. Since $f_*D$ is a $\mbZ$-divisor on $Y$, the difference $\left(\lrd f^*f_*D+E\rrd\right)-\left(\lrd f^*f_*D\rrd+\lrd E\rrd\right)$ is an effective $f$-exceptional $\mbZ$-divisor on $X$. Thus we may write $D=\lrd f^*f_*D\rrd+F$, where $F\>0$ is an effective $f$-exceptional $\mbZ$-divisor. Therefore by Lemma \ref{lem:projection-formula}, $f_*\mcO_X(D)\cong\mcO_X(f_*D)$.\\
\end{proof}

\begin{lemma}\label{lem:mori-dream-space}
	Let $X$ be a normal surface over an infinite field $k$ of characteristic $p>0$. Let $(X, \Delta\>0)$ be a KLT pair and $-(K_X+\Delta)$ is nef and big $\mbQ$-Cartier $\mbQ$-divisor. Then $X$ is a Mori dream space.
\end{lemma}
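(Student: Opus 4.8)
The plan is to show $X$ is a Mori dream space by verifying the criterion that $X$ is $\mbQ$-factorial (already known for KLT surfaces by Subsection \ref{sbs:klt-surface}) and that every $D$-MMP terminates for every $\mbQ$-divisor $D$. The natural route is to exhibit $X$ as a log Fano–type variety, since over nice fields such varieties are known to be Mori dream spaces. Concretely, because $-(K_X+\Delta)$ is nef and big with $(X,\Delta)$ KLT, I would first perturb $\Delta$ to produce a slightly larger boundary $\Delta'\>\Delta$ with $(X,\Delta')$ still KLT and $-(K_X+\Delta')$ ample. This is the standard trick: write $-(K_X+\Delta)=A+E$ with $A$ ample and $E\>0$ using nef-and-bigness (Kodaira's lemma), then set $\Delta'=\Delta+\e E$ for small $\e>0$; for $\e$ small enough the pair stays KLT, and $-(K_X+\Delta')=-(K_X+\Delta)-\e E$ can be arranged to be ample by absorbing the ample part. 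Thus $(X,\Delta')$ is a log del Pezzo (log Fano) pair.

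The second step is to run the MMP and invoke a termination/finiteness result for Fano-type surfaces. First I would note that $X$ is $\mbQ$-factorial, projective, with $-(K_X+\Delta')$ ample and $(X,\Delta')$ KLT, so $(X,\Delta')$ is of Fano type. For surfaces over a field of characteristic $p>0$, the full two-dimensional MMP with all its termination and cone-theorem statements is available in the literature — in particular the relevant machinery of Tanaka (the references \cite{Tan16aug} and \cite{Tan15} already cited in the excerpt). The key inputs I would cite are: the cone theorem for KLT surface pairs, existence of contractions of $(K_X+\Delta)$-negative extremal rays, and termination of the MMP in dimension two (there is no room for infinite flipping sequences since each step drops the Picard number). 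Combining these, every $D$-MMP for an arbitrary $\mbQ$-divisor $D$ terminates, which is exactly the defining property of a Mori dream space.

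The hypothesis that $k$ is infinite is presumably needed to guarantee the cone theorem and base-point-free/contraction results hold in the form used — for instance to ensure enough rational points or to apply a Bertini-type argument when perturbing $\Delta$ to $\Delta'$ and to run the relevant MMP results of Tanaka over a general (not necessarily algebraically closed) field. I would flag this carefully and cite the precise statement that applies over infinite fields.

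The step I expect to be the main obstacle is not the perturbation but correctly citing the termination/Mori-dream-space equivalence over a \emph{possibly imperfect, infinite} field: most classical Mori-dream-space results (e.g. Birkar–Cascini–Hacon–McKernan) are stated over algebraically closed fields of characteristic zero, so the work is in locating and applying the characteristic-$p$ surface analogues. The cleanest formulation would be to cite a theorem stating that a $\mbQ$-factorial projective Fano-type surface over an infinite field in positive characteristic is a Mori dream space; if such a statement is available in Tanaka's work, the proof reduces to the perturbation argument above plus a single citation. Thus the real content is reducing to Fano type via the perturbation, and the remainder is an application of the established two-dimensional MMP.
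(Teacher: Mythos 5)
Your first step --- using Kodaira's lemma to absorb the nef-and-big divisor into the boundary and reduce to the case where $-(K_X+\Delta)$ is ample with $(X,\Delta)$ KLT --- is exactly the paper's opening move, and note that it needs no Bertini argument or infiniteness of $k$ (the paper cites \cite[Remark 2.4]{Tan16aug} for the decomposition and then shrinks the coefficient of $E$ to preserve KLT). The genuine gap is in your second step. The cone theorem, contraction theorem and termination that you propose to cite are statements about $(K_X+B)$-MMPs for log pairs; by themselves they say nothing about a $D$-MMP for an \emph{arbitrary} $\mathbb{Q}$-divisor $D$, which is what the definition of Mori dream space in this paper requires. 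Your sentence ``Combining these, every $D$-MMP for an arbitrary $\mathbb{Q}$-divisor $D$ terminates'' is precisely the assertion that needs proof, and no combination of those three cited inputs yields it without an extra idea.

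The missing idea, which is the heart of the paper's proof, is to convert an arbitrary $D$-MMP into a log MMP: since $-(K_X+\Delta)$ is now ample, $D-l(K_X+\Delta)$ is ample for $l\gg 0$, hence $l'(D-l(K_X+\Delta))$ is very ample for suitable $l'$; by Bertini over an infinite field (Seidenberg) one chooses an \emph{irreducible} normal curve $A\sim l'(D-l(K_X+\Delta))$ whose support contains no component of $\Delta$, and sets $\Delta'=\Delta+\frac{1}{ll'}A$. Then $\frac{1}{l}D\sim_{\mathbb{Q}}K_X+\Delta'$ with $(X,\Delta')$ KLT, so any $D$-MMP \emph{is} a $(K_X+\Delta')$-MMP and terminates by \cite[Theorem 1.1]{Tan16aug}. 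This is also where the infinite-field hypothesis actually enters --- not in the perturbation step where you placed it. Finally, your fallback of citing a theorem that ``$\mathbb{Q}$-factorial Fano-type surfaces over infinite fields of characteristic $p$ are Mori dream spaces'' is circular in this context: that statement is exactly what the lemma establishes, and the paper explicitly notes that it could not locate a proper reference for such facts, which is why it supplies the Bertini argument itself.
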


\begin{proof}
Since $-(K_X+\Delta)$ is nef and big, there exists an effective divisor $E\>0$ such that $-(K_X+\Delta+\frac{1}{m}E)$ is ample for all $m\gg 0$ (see \cite[Remark 2.4]{Tan18}). Choose $m_0\gg 0$ such that $(X, \Delta+\frac{1}{m_0}E)$ is KLT. Then by replacing $\Delta+\frac{1}{m_0}E$ by $\Delta$ we may assume that $(X, \Delta)$ is KLT and $-(K_X+\Delta)$ is ample.\\
Since $(X, \Delta)$ is KLT, X is $\mbQ$-factorial by \cite[Corollary 4.11]{Tan18}. Let $D$ be a $\mbQ$-divisor on $X$. Then $D-l(K_X+\Delta)$ is ample for some $l\gg 0$ and thus $l'(D-l(K_X+\Delta))$ is very ample for some $l'>0$ sufficiently large and divisible. By the Bertini's theorem over infinite field \cite[Theorem 7 and 7', Page 368 and 376]{Sei50} there exists an irreducible normal curve $A\sim l'(D-l(K_X+\Delta))$ such that the support of $A$ does not contain any component of the support of $\Delta$. Set $\Delta':=\Delta+\frac{1}{ll'}A$. Then the coefficients of $\Delta'$ are in the interval $(0, 1)$ and $\frac{1}{l}D\sim_\mbQ K_X+\Delta'$. Then by \cite[Theorem 1.1]{Tan18} we can run $(K_X+\Delta')$-MMP (since $X$ is $\mbQ$-factorial) and it will terminate with either a minimal model or a Mori fiber space. Since $\frac{1}{l}D\sim_\mbQ K_X+\Delta'$, this is also a $D$-MMP, and hence $X$ is a Mori dream space.
	
\end{proof}

\begin{proposition}\label{pro:running-mmp}
Let $X$ be a regular (resp. normal) del Pezzo surface over an infinite field $k$. Let $(X, \Delta\>0)$ be a KLT pair and $D$ a $\mbZ$-divisor on $X$ such that $0\<A\sim_\mbQ D-(K_X+\Delta)$ is ample. We run a $(\Delta+A)$-MMP and terminate with a birational morphism $f:X\to Y$ 
\begin{equation}
	\xymatrixcolsep{3pc}\xymatrix{f:X=Y_0\ar[r]^-{f_1} & Y_1\ar[r]^-{f_2} & \cdots\ar[r] & \cdots\ar[r]^-{f_n} & Y_n:=Y. }
	\end{equation} 
Then the following conclusions hold.	
\begin{enumerate}
	\item $Y_i$ is a regular (resp. normal) del Pezzo surface (with canonical singularities), for all $i=1, 2,\ldots, n$. In particular, $Y$ is a regular (resp. normal) del Pezzo surface (with canonical singularities).\\
	\item We also have, $f_*\mcO_X(D)\cong\mcO_Y(f_*D)$.
\end{enumerate}
\end{proposition}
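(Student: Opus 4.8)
The plan is to prove both parts by induction on the number of steps $n$ of the MMP, reducing everything to the analysis of a single step $g\colon Z\to W$, which on a $\mbQ$-factorial surface is a divisorial contraction of an extremal curve $C$. Here $Z=Y_{i-1}$ is, by the inductive hypothesis, a del Pezzo surface of the required type (regular, resp.\ normal with canonical singularities), and $W=Y_i$. The guiding observation is that since $-K_Z$ is ample, \emph{every} curve satisfies $K_Z\cdot C<0$; in particular the contracted ray is automatically $K_Z$-negative, even though the MMP is run for $K_Z+\Delta_Z+A_Z$. Moreover $C^2<0$ by the negativity of a birational surface contraction.

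For part (1) I would first compute the discrepancy $a$ in $K_Z=g^*K_W+aC$: intersecting with $C$ and using $g^*K_W\cdot C=0$ gives $a\,C^2=K_Z\cdot C<0$, and since $C^2<0$ this forces $a>0$. From $a>0$ the singularities of $W$ are controlled: for any divisor $F$ over $W$ one has $a(F;W)=a(F;Z)+a\cdot m_F$, where $m_F\>0$ is the coefficient of $F$ in the total transform of $C$, so $a(F;W)\>a(F;Z)$ and the positivity of discrepancies propagates from $Z$ to $W$. In the normal case $Z$ is canonical — indeed $(X,0)$ is klt because $(X,\Delta)$ is klt with $\Delta\>0$, and a klt Gorenstein surface has integral discrepancies $>-1$, hence $\>0$ — so $W$ is canonical, thus Gorenstein with $-K_W$ an ample Cartier divisor; in the regular case $Z$ is terminal, so $W$ is terminal and therefore regular. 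Finally I would verify $-K_W$ is ample by Nakai--Moishezon: writing $g^*(-K_W)=-K_Z+aC$, for any curve $\gamma\subset W$ with strict transform $\tilde\gamma$ the projection formula gives $-K_W\cdot\gamma=(-K_Z)\cdot\tilde\gamma+a(C\cdot\tilde\gamma)>0$ (the first term positive since $-K_Z$ is ample, the second $\>0$), and using $g^*(-K_W)\cdot C=0$ one gets $(-K_W)^2=(-K_Z)^2+a(-K_Z\cdot C)>0$. This closes the induction for (1), the base case being $Y_0=X$.

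For part (2) the same one-step reduction applies through Corollary \ref{cor:projection-formula}. Since $D_Z\sim_{\mbQ}K_Z+\Delta_Z+A_Z$ and the step contracts a $(K_Z+\Delta_Z+A_Z)$-negative ray, we have $D_Z\cdot C<0$. Writing $D_Z=g^*g_*D_Z+eC$ and intersecting with $C$ gives $e\,C^2=D_Z\cdot C<0$, so $e>0$ and $eC$ is an effective $g$-exceptional divisor. Corollary \ref{cor:projection-formula} then yields $g_*\mcO_Z(D_Z)\cong\mcO_W(g_*D_Z)$, and composing these isomorphisms along $f_1,\dots,f_n$ gives $f_*\mcO_X(D)\cong\mcO_Y(f_*D)$. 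Throughout I would use that each $Y_i$ is $\mbQ$-factorial (being klt, by \cite{Tan16aug}), so that $D$, all its pushforwards, and all the canonical divisors in sight are $\mbQ$-Cartier and the corollary genuinely applies.

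The clean parts of this argument are the discrepancy computation, the Nakai--Moishezon verification of ampleness, and the projection-formula step. The main obstacle is the scheme-theoretic singularity bookkeeping in part (1) over an \emph{imperfect} base field: the contracted curve $C$ need not be a $(-1)$-curve in the classical sense (it may fail to be geometrically reduced), and the implication ``terminal surface $\Rightarrow$ regular'' is genuinely stronger than ``terminal $\Rightarrow$ smooth'' and is not automatic. I expect this step to rely on the classification of terminal and canonical surface singularities over arbitrary fields (as in \cite{Kol13,Tan16aug}), which is what guarantees that $Y_i$ is \emph{regular} (resp.\ normal with canonical, hence Gorenstein, singularities) rather than merely having well-behaved discrepancies.
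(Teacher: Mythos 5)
Your proposal is correct and follows essentially the same route as the paper: induction on the MMP steps, observing that ampleness of $-K$ makes each contraction automatically $K$-negative, deducing that terminal (resp.\ canonical) singularities propagate and invoking the classification over arbitrary fields (\cite[Theorem 2.29]{Kol13}) to get regularity (resp.\ the Cartier property of $K$), verifying ampleness of $-K$ downstairs by the same intersection computation, and proving part (2) by showing the exceptional part of $D$ is effective so that Corollary \ref{cor:projection-formula} applies. The only cosmetic differences are that you re-derive the discrepancy comparison and the effectivity of the exceptional coefficient by direct intersection with the contracted curve, where the paper cites \cite[Corollary 3.43(3)]{KM98} and the negativity lemma of \cite{Tan16aug}.
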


\begin{proof}
Note that $X$ is $\mbQ$-factorial (see Subsection \ref{sbs:klt-surface}). Since $-K_X$ is ample, by Lemma \ref{lem:mori-dream-space} $X$, is a Mori dream space. In particular, running a $(\Delta+A)$-MMP makes sense. Since $\Delta+A$ is effective, every $(\Delta+A)$-MMP terminates with a minimal model (not a Mori fiber space). In particular, every $f_i:Y_{i-1}\to Y_i$ is a birational morphism.\\
Set $\Delta_i:=(f_i\circ f_{i-1}\cdots \circ f_1)_*\Delta, A_i:=(f_i\circ f_{i-1}\cdots \circ f_1)_*A, f:=f_n\circ f_{n-1}\circ\cdots\circ f_1, \Delta':=\Delta_n=f_*\Delta$, and $A':=A_n=f_*A$.\\

We will work with the regular del Pezzo case first. Consider $f_1:X\to Y_1$. Since $X$ is regular, it has terminal singularities. Since $f_1$ is a birational contraction of a $K_X$-negative extremal ray (as $-K_X$ is ample), by \cite[Corollary 3.43(3)]{KM98}, $Y_1$ also has terminal singularities. Then by \cite[Theorem 2.29(1)]{Kol13}, $Y_1$ is regular and $K_X=f^*_1K_{Y_1}+aE$, for some $a>0$.\\ 
Let $C$ be a curve on $Y_1$. Then by the projection formula, $K_{Y_1}\cdot C=K_X\cdot f^*_1C<0$ and $K^2_{Y_1}=K_X\cdot f^*_1K_{Y_1}=K_X\cdot(K_X-aE)=(K^2_X-aK_X\cdot E)>0$, since $-K_X$ is ample and $a>0$. Therefore $-K_{Y_1}$ is an ample divisor and $Y_1$ is a regular surface, i.e., $Y_1$ is a regular del Pezzo surface. Then by induction on $i$ it follows that $Y_i$ is a regular del Pezzo surface, for all $i=1, 2,\ldots, n$.\\

Now we will work with the normal del Pezzo case. In this case $X$ has canonical singularities, since $X$ has KLT singularities and $K_X$ is Cartier. Then as in the previous case, by \cite[Corollary 3.43(3)]{KM98} it follows that $Y_1$ also has canonical singularities. Then by \cite[Theorem 2.29(2)]{Kol13} $K_{Y_1}$ is a Cartier divisor. By a similar computation as in the previous case it also follows that $-K_{Y_1}$ is ample. Therefore $Y_1$ is a normal del Pezzo surface with canonical singularities. Then by induction on $i$ it follows that $Y_i$ is a normal del Pezzo surface with canonical singularities, for all $i=1, 2,\ldots, n$.\\ 

Now we prove the second part. Note that, since $D\sim_\mbQ K_X+\Delta+A$ and $-K_X$ is ample, it follows from the previous part that every step of the $(\Delta+A)$-MMP is also a step of the $D$-MMP. Let $E$ be the curve contracted by $f_1:X\to Y_1$. Write $D=f^*_1{f_1}_*D+aE$. Then $-aE$ is $f_1$-nef, since $D\cdot E<0$. Therefore applying the Negativity lemma \cite[Lemma 2.11]{Tan18} we see that $a\>0$. Note that $a\>0$ may not be an integer when $X$ is a normal del Pezzo surface. However, by Corollary \ref{cor:projection-formula} we have, ${f_1}_*\mcO_X(D)\cong\mcO_{Y_1}({f_1}_*D)$.\\

Similarly, for $D_1={f_1}_*D\sim_\mbQ K_{Y_1}+\Delta_1+A_1$ and $f_2:Y_1\to Y_2$ we get, $(f_2\circ f_1)_*\mcO_X(D)\cong {f_2}_*\mcO_{Y_1}({f_1}_*D)\cong \mcO_{Y_2}((f_2\circ f_1)_*D)$. Thus by induction on $i$, we have, $f_*\mcO_X(D)\cong\mcO_Y(f_*D)$, where $f=f_n\circ f_{n-1}\circ\cdots\circ f_1$.\\

\end{proof}

\begin{lemma}\label{lem:nef-big-kv}
	Let $X$ be a normal del Pezzo surface with canonical singularities over an arbitrary field $k$. If $D$ is a nef and big $\mbQ$-Cartier $\mbZ$-divisor on $X$, then $H^1(X, K_X+p^eD)=0$ for all $e>0$ sufficiently large and divisible.
\end{lemma}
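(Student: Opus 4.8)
The plan is to realize $D$ as the pullback of an ample divisor under a birational morphism via the base-point-free theorem, and then reduce the vanishing to ordinary Serre vanishing downstairs through relative vanishing and the Leray spectral sequence. First I would record that $X$ is Gorenstein (Remark \ref{rmk:gorenstein}), so $K_X$ is Cartier and $\mathcal{O}_X(K_X+p^eD)$ is the divisorial sheaf attached to a $\mathbb{Z}$-divisor. The top degree $i=2$ is immediate from the Serre duality of Subsection \ref{sbs:serre-duality}: one has $H^2(X,\mathcal{O}_X(K_X+p^eD))\cong H^0(X,\mathcal{O}_X(-p^eD))^*$, and a nonzero section would give an effective $G\sim -p^eD$ with $D\cdot G=-p^e(D^2)<0$, contradicting nefness of $D$ together with $D^2>0$. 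The remaining degrees will be handled uniformly below.

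Next I would show that $D$ is semiample. Since $X$ has canonical singularities, $(X,0)$ is klt, and $D-K_X$ is the sum of the nef divisor $D$ and the ample divisor $-K_X$, hence ample and in particular nef and big; so by the base-point-free theorem for surfaces (\cite{Tan16aug}) the nef $\mathbb{Q}$-Cartier divisor $D$ is semiample. Thus the ring $\bigoplus_{m\>0}H^0(X,\mathcal{O}_X(mD))$ is finitely generated and defines a morphism $g\colon X\to Z$ onto a normal projective surface $Z$ with $dD=g^*H_0$ for some integer $d\>1$ and some ample Cartier divisor $H_0$ on $Z$; because $D$ is big, $g$ is birational.

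Then I would push everything down to $Z$. Writing $p^e=qd+r$ with $0\<r<d$, we get $K_X+p^eD=(K_X+rD)+g^*(qH_0)$, and since $g^*(qH_0)$ is Cartier this yields $\mathcal{O}_X(K_X+p^eD)\cong\mathcal{O}_X(K_X+rD)\otimes g^*\mathcal{O}_Z(qH_0)$. Every $g$-exceptional curve $C$ satisfies $D\cdot C=0$ (as $dD\cdot C=H_0\cdot g_*C=0$), so $rD$ is $g$-numerically trivial, in particular $g$-nef and $\mathbb{Q}$-Cartier; the expected relative Kawamata–Viehweg vanishing for the birational morphism $g$ then gives $R^ig_*\mathcal{O}_X(K_X+rD)=0$ for all $i>0$. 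Combined with the projection formula, the Leray spectral sequence degenerates and produces $H^i(X,\mathcal{O}_X(K_X+p^eD))\cong H^i(Z,\,g_*\mathcal{O}_X(K_X+rD)\otimes\mathcal{O}_Z(qH_0))$ for every $i$. As $e$ grows the residue $r$ takes only the finitely many values $0,1,\dots,d-1$, so there are only finitely many coherent sheaves $g_*\mathcal{O}_X(K_X+rD)$ involved, while $q=(p^e-r)/d\to\infty$; Serre vanishing on $Z$ for the ample $H_0$ (applied uniformly over these finitely many sheaves) then kills the right-hand side for $i>0$ once $e$ is large, which is exactly the assertion.

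I expect the main obstacle to be the relative vanishing $R^ig_*\mathcal{O}_X(K_X+rD)=0$: although Kawamata–Viehweg vanishing fails globally over imperfect fields, this is a statement local around the exceptional curves of a birational contraction of surfaces and should hold in all characteristics. Making it rigorous over a possibly imperfect field is the technical heart — the natural route is to factor $g$ through a resolution of $X$, use that the canonical (Du Val) singularities of $X$ are crepant and rational, and invoke Grauert–Riemenschneider vanishing for surface resolutions, which is characteristic-free in dimension two. A secondary point to verify is the availability of the base-point-free theorem and resolution in the stated generality; if necessary one first base changes to the infinite, separable extension $k(t)/k$, which preserves normality, the canonical del Pezzo structure, nefness and bigness (Lemma \ref{lem:positive-base-change}), and commutes with cohomology.
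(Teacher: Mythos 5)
Your proof is correct, but it follows a genuinely different route from the paper's. The paper's own argument is a short reduction to a quoted theorem: writing the Cartier index of $D$ as $m=p^a l$ with $p\nmid l$, it observes that $(p^e-p^a)D$ is a nef and big \emph{Cartier} divisor once $e$ is sufficiently divisible (this is the source of the ``sufficiently divisible'' in the statement), and then applies the vanishing theorem of \cite[Theorem 1.4]{Tan16aug} to the decomposition $K_X+p^eD=K_X+p^aD+(p^e-p^a)D$, noting $D\not\equiv 0$ since $D$ is big. You instead build the vanishing from scratch: base point freeness makes $D$ semiample (as $D-K_X$ is nef plus ample, hence ample), the induced birational contraction $g\colon X\to Z$ writes $dD=g^*H_0$ with $H_0$ ample Cartier, and then $p^e=qd+r$, relative vanishing, projection formula/Leray, and Serre vanishing on $Z$ finish the job. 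Two remarks on your execution: the step you flag as ``the technical heart'' --- the relative vanishing $R^ig_*\mcO_X(K_X+rD)=0$ --- needs no resolution or Grauert--Riemenschneider detour, because $X$ is canonical (hence klt) and $rD$ is $g$-numerically trivial, hence $g$-nef, so this is precisely \cite[Theorem 10.4]{Kol13} (or \cite[Theorem 1.3]{Tan16aug}), valid for excellent surfaces over any field and already invoked elsewhere in this paper (Corollary \ref{cor:non-vanishing} and Theorem \ref{thm:main}); moreover $R^2g_*$ vanishes automatically since the fibers of $g$ have dimension at most one. Likewise, the base point free theorem you need is available in the stated generality in \cite{Tan16aug}, so the $k(t)$ fallback is an unnecessary precaution. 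Comparing the two: your argument is longer but avoids the black box of \cite[Theorem 1.4]{Tan16aug}, relying only on relative vanishing and Serre vanishing, and it yields slightly more --- vanishing for \emph{all} sufficiently large $e$ (no divisibility condition) and an explicit treatment of $i=2$, whereas the paper's proof explicitly addresses only $H^1$.
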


\begin{proof}
	Let $m>0$ be the Cartier index of $D$. Write $m=p^al$ such that $a\>0$ and $p$ does not divide $l$. Then $p^a(p^{e-a}-1)D=(p^e-p^a)D$ is Cartier for $e>0$ sufficiently large and divisible. Moreover, $(K_X+p^aD)-K_X=p^aD$ is nef and big, and $D\not\num 0$, since $D$ is big. Therefore by \cite[Theorem 3.8]{Tan18} in $\chr p>0$ or by the Kawamata-Viehweg vanishing theorem in $\chr 0$ we have $H^1(X, \mcO_X(K_X+p^eD))=H^1(X, \mcO_X(K_X+p^aD+(p^e-p^a)D))=0$, for all $e>0$ sufficiently large and divisible.\\
\end{proof}

\begin{proposition}\label{pro:injective-cohomology}
	Let $X$ be a normal del Pezzo surface over an algebraically closed field $k$ of characteristic $p>3$. Let $D\>0$ be an effective $\mbQ$-Cartier nef and big $\mbZ$-divisor on $X$ and $H^1(X, \mcO_X(K_X+p^eD))=0$ for all sufficiently large and divisible $e>0$. Then $H^i(X, \mcO_X(K_X+D))=0$ for all $i>0$.
\end{proposition}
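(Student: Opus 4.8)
The plan is to prove the vanishing separately in degrees $i=2$ and $i=1$, the first being immediate and the second being the real content, which I would attack through a Frobenius-twisted cohomology map (this is the "injectivity of cohomology" that gives the proposition its name).

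For $i=2$ I would invoke the Serre duality recalled in Subsection~\ref{sbs:serre-duality}: since $X$ is a normal (hence Cohen--Macaulay) surface and $\mcO_X(K_X+D)$ is a divisorial sheaf, $H^{2}(X,\mcO_X(K_X+D))\cong H^{0}(X,\mcO_X(-D))^{*}$. As $D\geq 0$ is effective, big and nonzero, $-D$ pairs negatively with any ample divisor and so is not linearly equivalent to an effective divisor; hence $H^{0}(X,\mcO_X(-D))=0$ and $H^{2}(X,\mcO_X(K_X+D))=0$.

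For the essential claim $H^{1}(X,\mcO_X(K_X+D))=0$, the plan is to produce a natural $\mcO_X$-linear map $\psi_e\colon \mcO_X(K_X+D)\to F^{e}_{*}\mcO_X(K_X+p^{e}D)$, where $F^{e}\colon X\to X$ is the $e$-th absolute Frobenius, and to show that $H^{1}(\psi_e)$ is injective. Since $F^{e}$ is finite, $H^{1}(X,F^{e}_{*}\mcO_X(K_X+p^{e}D))=H^{1}(X,\mcO_X(K_X+p^{e}D))$, which vanishes for $e\gg 0$ by hypothesis; injectivity of $H^{1}(\psi_e)$ then forces $H^{1}(X,\mcO_X(K_X+D))=0$. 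To build $\psi_e$, reflexive Frobenius pullback gives $(F^{e})^{[*]}\mcO_X(K_X+D)\cong\mcO_X(p^{e}(K_X+D))$, so the unit of adjunction yields $\mcO_X(K_X+D)\to F^{e}_{*}\mcO_X(p^{e}K_X+p^{e}D)$. Because $-K_X$ is ample, the sheaf $\mcO_X\big((p^{e}-1)(-K_X)\big)=\mcO_X\big((1-p^{e})K_X\big)$ is ample and, for $e\gg 0$, carries a nonzero section $t$; multiplication by $t$ gives $\mcO_X(p^{e}K_X+p^{e}D)\xrightarrow{\cdot t}\mcO_X(K_X+p^{e}D)$, using $(1-p^{e})K_X+p^{e}K_X=K_X$, and composing with $F^{e}_{*}(\cdot t)$ produces $\psi_e$. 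I would emphasize that the twist by $(1-p^{e})K_X$ is exactly what converts the Frobenius-pullback target $p^{e}(K_X+D)$ into the divisor $K_X+p^{e}D$ controlled by the hypothesis, and this is the step that genuinely uses the Fano condition $-K_X$ ample.

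The hard part will be the injectivity of $H^{1}(\psi_e)$. I would reduce it to the stronger statement that $\psi_e$ is \emph{split} injective as a map of $\mcO_X$-modules; equivalently, after untwisting, that the map $\mcO_X\to F^{e}_{*}\mcO_X(\div_{0}(t))$ attached to the effective ample divisor $\div_{0}(t)\in\big|(p^{e}-1)(-K_X)\big|$ splits for $e\gg 0$. This is precisely a global Frobenius-splitting (F-regularity) property of the del Pezzo surface $X$ over $\bar{k}$, and it is here that the assumption $p>3$ is indispensable, consistent with Bernasconi's counterexample at $p=3$. Dually, by Serre duality the same point reads as surjectivity of the trace map $H^{1}(X,\mcO_X(-p^{e}D))\to H^{1}(X,\mcO_X(-D))$. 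Establishing this splitting (or the dual surjectivity) for a del Pezzo surface in characteristic $p>3$ is the main obstacle; once it is secured, the long exact sequence together with the vanishing of $H^{1}(X,\mcO_X(K_X+p^{e}D))$ closes the argument.
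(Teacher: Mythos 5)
Your treatment of $i=2$ is correct and matches the paper, and your general strategy of sandwiching $H^1(X,\mcO_X(K_X+D))$ into the group $H^1(X,\mcO_X(K_X+p^eD))$ killed by the hypothesis is the right one; the twisted map $\psi_e$ is also well defined. But the injectivity of $H^1(\psi_e)$, which you yourself flag as the main obstacle, is the entire content of the proposition, and the mechanism you propose for it --- split injectivity of $\psi_e$, i.e.\ a global Frobenius splitting of $\mcO_X\to F^e_*\mcO_X(\operatorname{div}_0(t))$ with $\operatorname{div}_0(t)\in |(p^e-1)(-K_X)|$ --- is strictly stronger than the proposition and fails under its hypotheses. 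Any such splitting restricts to a splitting of $\mcO_X\to F^e_*\mcO_X$, so $X$ would have to be globally F-split, hence F-pure at every point. This already fails for normal del Pezzo surfaces in characteristic $5$: the degree-one Gorenstein del Pezzo surface $X=\{z^2=y^3+x_0^5x_1\}\subset \mbP(1,1,2,3)$ avoids the singular points of the ambient space, so $-K_X\cong\mcO_X(1)$ is ample and Cartier, and its unique singular point $[0:1:0:0]$ is the rational double point $z^2=y^3+x_0^5$ of type $E_8$, which is not F-pure in characteristic $5$ by Fedder's criterion (every monomial of $f^4$ lies in $(x_0^5,y^5,z^5)$). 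The proposition holds for this $X$, so no argument routed through your splitting can prove it; the positive F-splitting results you would want to invoke (Hara for smooth del Pezzo surfaces, Cascini--Tanaka--Witaszek for log del Pezzo surfaces) need $p\geq 7$, respectively $p\gg 0$, not $p>3$.

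The paper sidesteps F-splitting entirely by running the Frobenius argument on the Serre-dual side, where injectivity is elementary. Since $-K_X$ is ample Cartier and $p>3$, \cite[Theorem 1.9]{PW17} and Serre duality give $H^1(X,\mcO_X)=0$; then \cite[Lemma 3.2]{CT16jul} produces, for every $e>0$, an injection $H^1(X,\mcO_X(-D))\hookrightarrow H^1(X,\mcO_X(-p^eD))$ induced by the \emph{plain} (untwisted) Frobenius --- no section $t$ is needed because $F^{e*}\mcO_X(-D)\cong\mcO_X(-p^eD)$ on the nose, and the injectivity is a diagram chase with the structure sequence $0\to\mcO_X(-D)\to\mcO_X\to\mcO_D\to 0$ using exactly the two inputs $H^1(X,\mcO_X)=0$ and $D\>0$ effective. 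Serre duality converts the hypothesis into $H^1(X,\mcO_X(-p^eD))=0$, whence $H^1(X,\mcO_X(-D))=0$ and $H^1(X,\mcO_X(K_X+D))=H^1(X,\mcO_X(-D))^*=0$. Note that the effectivity of $D$, which your degree-one argument never invokes, is precisely what powers this elementary injectivity; your plan trades it for positivity of $-K_X$ via F-splitting, and that trade is what breaks down at $p=5$.
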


\begin{proof}
	Since $-K_X$ is ample and Cartier and characteristic $p>3$, by \cite[Theorem 1.9]{PW17} $H^1(X, \mcO_X(K_X))=0$. Then by Serre duality (see Subsection \ref{sbs:serre-duality}), $H^1(X, \mcO_X)=H^1(X, \mcO_X(K_X))^*=0$. Therefore by \cite[Lemma 3.2]{CT16jul} there is an injection
	\begin{equation}\label{eqn:injective-cohomology}
		H^1(X, \mcO_X(-D))\injective H^1(X, \mcO_X(-p^eD))
	\end{equation}
	for every positive integer $e>0$.\\
Again, by Serre duality, $H^1(X, \mcO_X(-p^eD))=H^1(X, \mcO_X(K_X+p^eD))^*$. Hence $H^1(X, \mcO_X(-p^eD))=0$, and consequently from  \eqref{eqn:injective-cohomology} it follows that
 \[H^1(X, \mcO_X(K_X+D))=H^1(X, \mcO_X(-D))^*
=0.\]
Since $D$ is effective, we also have $H^2(X, \mcO_X(K_X+D))=H^0(X, \mcO_X(-D))^*=0$.\\		
\end{proof}

\begin{lemma}\label{lem:non-vanishing}
	Let $X$ be a regular projective surface over an arbitrary field $k$ of characteristic $p>3$. Let $L$ be a nef and big $\mbZ$-divisor on $X$. Further assume that one of the following conditions is satisfied:
	\begin{enumerate}
		\item $X$ is a regular del Pezzo surface, or
		\item $-K_X$ is nef and big and $H^1(X, \mcO_X)=0$.
	\end{enumerate} 
	Then $H^0(X, \mcO_X(L))\neq 0$.
\end{lemma}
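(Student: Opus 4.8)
The plan is to prove $H^0(X, \mcO_X(L)) \neq 0$ by showing that the Euler characteristic $\chi(X, \mcO_X(L))$ is positive, and then controlling the higher cohomology $H^1$ and $H^2$ to conclude that $H^0$ must be nonzero. By Riemann--Roch on the regular surface $X$ (which is Gorenstein, so $K_X$ is well-behaved), we have
\[
\chi(X, \mcO_X(L)) = \chi(X, \mcO_X) + \frac{1}{2}L\cdot(L - K_X).
\]
Since $L$ is nef and $-K_X$ is ample (in case (1)) or nef and big (in case (2)), the intersection number $L\cdot(L-K_X) = L^2 - L\cdot K_X = L^2 + L\cdot(-K_X)$ is nonnegative, and in fact one expects it to be strictly positive because $L$ is big, so $L^2 > 0$. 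Thus the plan is first to establish that this Riemann--Roch term grows (or is bounded below by a positive quantity), pushing $\chi$ to be positive once we know $\chi(X, \mcO_X) = h^0(X,\mcO_X) - h^1(X,\mcO_X) + h^2(X,\mcO_X)$ is not too negative.

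\medskip

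Next I would control $H^2$. By Serre duality (see Subsection \ref{sbs:serre-duality}), $H^2(X, \mcO_X(L)) \cong H^0(X, \mcO_X(K_X - L))^*$. Since $L$ is big and $-K_X$ is (at least) nef and big, the divisor $K_X - L$ is anti-nef-and-big, so $K_X - L$ cannot be effective; hence $H^0(X, \mcO_X(K_X - L)) = 0$ and therefore $H^2(X, \mcO_X(L)) = 0$. This reduces the problem to showing that $\chi(X, \mcO_X(L)) = h^0 - h^1 > 0$, which would suffice since then $h^0 \geq \chi + h^1 \geq \chi > 0$ provided $\chi > 0$. In case (2) we are handed $H^1(X, \mcO_X) = 0$ directly, and in case (1), since $X$ is a regular del Pezzo surface over a field of characteristic $p > 3$, the vanishing $H^1(X, \mcO_X) = 0$ follows from \cite[Theorem 1.9]{PW17} (Kodaira vanishing for del Pezzo surfaces over imperfect fields in $p>3$) combined with Serre duality, exactly as in the proof of Proposition \ref{pro:injective-cohomology}. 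Together with $H^2(X, \mcO_X) = H^0(X, \mcO_X(K_X))^* = 0$ (as $-K_X$ is big, so $K_X$ is not effective), this gives $\chi(X, \mcO_X) = h^0(X, \mcO_X) = \dim_k H^0(X,\mcO_X) \geq 1 > 0$.

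\medskip

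With these pieces in hand the argument assembles as follows: $\chi(X, \mcO_X(L)) = \chi(X, \mcO_X) + \frac{1}{2}L\cdot(L-K_X) \geq 1 + \frac{1}{2}L\cdot(L-K_X)$, and the second term is nonnegative, so $\chi(X, \mcO_X(L)) \geq 1 > 0$. Since $H^2(X, \mcO_X(L)) = 0$, we get $h^0(X, \mcO_X(L)) = \chi(X, \mcO_X(L)) + h^1(X, \mcO_X(L)) \geq \chi(X, \mcO_X(L)) > 0$, which is the desired conclusion.

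\medskip

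The main obstacle I anticipate is the Riemann--Roch step itself over an \emph{imperfect} ground field: the excerpt emphasizes that $X$ is regular but possibly not smooth over $k$, so I must ensure the correct form of Riemann--Roch and the self-intersection/intersection-theory conventions hold in this setting (for instance, intersection numbers computed with respect to the possibly-nonreduced residue-field degrees, and the term $\chi(X,\mcO_X)$ interpreted as $k$-dimensions). I would address this by invoking a version of Riemann--Roch valid for proper regular surfaces over an arbitrary field, noting that $X$ is Gorenstein and Cohen--Macaulay so the duality and the canonical class behave correctly; the key numerical input, $L\cdot(L-K_X) \geq 0$ with equality only in degenerate cases, should then follow purely from nefness and bigness via the base-change Lemma \ref{lem:positive-base-change} if one wishes to reduce intersection-number positivity to the geometric case. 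The delicate point is confirming $L\cdot(L - K_X) \geq 0$ rigorously rather than merely plausibly, and verifying that $h^0(X, \mcO_X) \geq 1$ even when $X$ is not geometrically connected — though for a del Pezzo surface one expects $H^0(X, \mcO_X)$ to be a field, hence nonzero.
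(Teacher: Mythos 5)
Your proposal is correct and follows essentially the same route as the paper's own proof: Riemann--Roch on the regular surface (valid over an arbitrary field by \cite[Theorem 2.10]{Tan16aug}), vanishing of $H^2(X,\mcO_X(L))$ and $H^2(X,\mcO_X)$ via Serre duality and the non-effectivity of $K_X-L$ and $K_X$, the input $H^1(X,\mcO_X)=0$ from \cite[Theorem 1.9]{PW17} in the del Pezzo case, and then $h^0 \geq \chi > 0$. The only cosmetic differences are that you settle for $L\cdot(L-K_X)\geq 0$ where the paper notes strict positivity (either suffices given $h^0(X,\mcO_X)\geq 1$), and your worry about Riemann--Roch over imperfect fields is exactly resolved by the citation the paper uses.
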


\begin{proof}
	Since $X$ is a regular surface, by the Riemann-Roch theorem (see \cite[Theorem 2.10]{Tan18}) we have
	\begin{equation}\label{eqn:rr}
		\chi(X, \mcO_X(L))=\frac{1}{2}L\cdot_k(L-K_X)+\chi(X, \mcO_X).
	\end{equation}
Now $\chi(X, \mcO_X(L))=h^0(X, \mcO_X(L))-h^1(X, \mcO_X(L))+h^2(X, \mcO_X(L))$. By Serre duality we have, $h^2(X, \mcO_X(L))=h^0(X, \mcO_X(K_X-L))$. We claim that $h^0(X, \mcO_X(K_X-L))=0$. If $0\neq s\in H^0(X, \mcO_X(K_X-L))$ is a non-zero section, then $0\neq s^m$ is a non-zero section of $H^0(X, \mcO_X(m(K_X-L)))$ for all $m>0$. But $H^0(X, \mcO_X(-m(K_X-L)))\neq 0$ for all $m\gg 0$, since $-(K_X-L)$ is ample (resp. big). Hence, $h^0(X, \mcO_X(K_X-L))=0$. In particular, we have
\begin{equation}\label{eqn:lhs-rr}
	\chi(X, \mcO_X(L))=h^0(X, \mcO_X(L))-h^1(X, \mcO_X(L)).
\end{equation} 
 On the other hand, $L\cdot_k(L-K_X)=L^2+(-K_X\cdot_k L)>0$, since $L$ is nef and big and $-K_X$ is ample (resp. nef). Also, $\chi(X, \mcO_X)=h^0(X, \mcO_X)-h^1(X, \mcO_X)+h^2(X, \mcO_X)$. When $X$ is a regular del Pezzo surface in characteristic $p>3$, by \cite[Theorem 1.9]{PW17} and Serre duality we have $H^1(X, \mcO_X)=0$. Again, by the Serre duality, $H^2(X, \mcO_X)=H^0(X, \mcO_X(K_X))^*=0$, since $-K_X$ is ample (resp. big). Therefore from \eqref{eqn:rr} we get
 \begin{equation}\label{eqn:rhs-rr}
 	\chi(X, \mcO_X(L))=\frac{1}{2}L\cdot_k(L-K_X)+h^0(X, \mcO_X)>0.
 \end{equation} 
Then comparing \eqref{eqn:lhs-rr} and \eqref{eqn:rhs-rr} we get,
\[
	\dim_k H^0(X, \mcO_X(L))>0.
\]	
This concludes the proof.\\

\end{proof}

\begin{corollary}\label{cor:non-vanishing}
	Let $X$ be a normal del Pezzo surface with canonical singularities over an arbitrary field $k$ of characteristic $p>3$. Let $L$ be a nef and big Cartier divisor on $X$. Then $H^0(X, \mcO_X(L))\neq 0$.
\end{corollary}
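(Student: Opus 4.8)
The plan is to reduce the statement to the regular case already handled in Lemma \ref{lem:non-vanishing}(2) by passing to the minimal resolution of $X$. Since $X$ is a normal surface of finite type over $k$, it is excellent, so a minimal resolution $\pi\colon \tilde{X}\to X$ exists, with $\tilde{X}$ regular. As $X$ has canonical singularities, $\pi$ is crepant, i.e. $K_{\tilde{X}}=\pi^*K_X$. Set $\tilde{L}:=\pi^*L$, which is a $\mbZ$-divisor since $L$ is Cartier. First I would record that $\tilde{L}$ is again nef and big: nefness is preserved under any pullback (for a curve $C\subset\tilde{X}$ one has $\tilde{L}\cdot C=L\cdot\pi_*C\>0$), and bigness is preserved under the birational morphism $\pi$ (concretely $\tilde{L}^2=L^2>0$ together with nefness). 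Likewise $-K_{\tilde{X}}=\pi^*(-K_X)$ is nef and big because $-K_X$ is ample.

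Next I would verify the remaining hypothesis of Lemma \ref{lem:non-vanishing}(2), namely $H^1(\tilde{X}, \mcO_{\tilde{X}})=0$. Canonical surface singularities are rational, so $R^i\pi_*\mcO_{\tilde{X}}=0$ for $i>0$ and $\pi_*\mcO_{\tilde{X}}=\mcO_X$; the Leray spectral sequence then gives $H^1(\tilde{X}, \mcO_{\tilde{X}})\cong H^1(X, \mcO_X)$. It therefore suffices to show $H^1(X, \mcO_X)=0$. Since $-K_X$ is an ample Cartier divisor and $\chr k=p>3$, the Kodaira-type vanishing of \cite[Theorem 1.9]{PW17} yields $H^1(X, \mcO_X(K_X))=0$, and Serre duality (Subsection \ref{sbs:serre-duality}, using that the Gorenstein surface $X$ is Cohen-Macaulay) gives $H^1(X, \mcO_X)\cong H^1(X, \mcO_X(K_X))^*=0$, exactly as in the proof of Proposition \ref{pro:injective-cohomology}.

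With these verifications in hand, Lemma \ref{lem:non-vanishing} applied to the regular surface $\tilde{X}$ and the nef and big $\mbZ$-divisor $\tilde{L}$ gives $H^0(\tilde{X}, \mcO_{\tilde{X}}(\tilde{L}))\neq 0$. Finally I would descend this nonvanishing to $X$: since $L$ is Cartier, the projection formula gives $\pi_*\mcO_{\tilde{X}}(\tilde{L})=\pi_*\pi^*\mcO_X(L)\cong\mcO_X(L)\otimes\pi_*\mcO_{\tilde{X}}\cong\mcO_X(L)$, whence $H^0(X, \mcO_X(L))\cong H^0(\tilde{X}, \mcO_{\tilde{X}}(\tilde{L}))\neq 0$, as desired.

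The step I expect to be the main obstacle is the vanishing $H^1(\tilde{X}, \mcO_{\tilde{X}})=0$, and more precisely its two ingredients over a possibly imperfect ground field: that the minimal resolution of a canonical surface singularity is crepant and that such singularities are rational (so that cohomology can be transported between $\tilde{X}$ and $X$), and the input $H^1(X, \mcO_X)=0$, which rests on the deep vanishing theorem of Patakfalvi--Waldron and is exactly where the hypothesis $p>3$ is used. The remaining assertions (preservation of nefness and bigness under pullback, and the projection-formula descent of sections) are routine.
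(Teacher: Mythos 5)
Your proof is correct and follows essentially the same route as the paper's: pass to a crepant resolution (the paper invokes \cite[Theorem 2.29(2)]{Kol13}, you use the minimal resolution), use rationality of the singularities to identify $H^1(\tilde{X},\mcO_{\tilde{X}})$ with $H^1(X,\mcO_X)$, kill the latter via \cite[Theorem 1.9]{PW17} and Serre duality, apply Lemma \ref{lem:non-vanishing}(2) upstairs, and descend sections. The only cosmetic difference is that the paper derives rationality on the spot from the relative Kawamata--Viehweg vanishing theorem \cite[Theorem 10.4]{Kol13} (which also gives the descent of $H^0$ for free), whereas you cite rationality of canonical surface singularities as a known fact and descend by the projection formula; both justifications are valid.
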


\begin{proof}
	Since $X$ has canonical singularities, by \cite[Theorem 2.29(2)]{Kol13} either $X$ is regular or $K_X$ is Cartier and there is a resolution of singularities $f:Y\to X$ such that $K_Y=f^*K_X$. If $X$ is regular, then the conclusion follows directly from Lemma \ref{lem:non-vanishing}. So assume that $K_X$ is Cartier and there is a crepant resolution $f:Y\to X$. Then $-K_Y$ and $f^*L$ are both nef and big on $Y$. Thus by the relative Kawamata-Viehweg vanishing theorem \cite[Theorem 10.4]{Kol13}, $R^1f_*\mcO_Y=R^1f_*\mcO_Y(K_Y-K_Y)=0$; in particular, $X$ has rational singularities. Therefore, for any line bundle $\msM$ on $X$, we have $H^i(Y, f^*\msM)=H^i(X, \msM)$, for all $i\>0$. On the other hand, since $X$ is normal and $-K_X$ is ample and Cartier, by \cite[Theorem 1.9]{PW17} and Serre duality (see Subsection \ref{sbs:serre-duality}), $H^1(X, \mcO_X)=H^1(X, \mcO_X(K_X))^*=0$. Hence, $H^1(Y, \mcO_Y)=0$. Then again by Lemma \ref{lem:non-vanishing}, $H^0(X, \mcO_X(L))=H^0(Y, \mcO_Y(f^*L))\neq 0$. 	 
\end{proof}

\section{Main Theorem}

\begin{theorem}\label{thm:main}
Let $(X, \Delta\>0)$ be a projective KLT pair of dimension $2$ over an arbitrary field $k$ of characteristic $p>3$. Let $D$ be a $\mbZ$-divisor on $X$ such that $D-(K_X+\Delta)$ is nef and big. Further assume that one of the following conditions is satisfied:
\begin{enumerate}
	\item $X$ is a regular del Pezzo surface, or
	\item $X$ is a normal del Pezzo surface and $D\>0$ is an effective $\mbZ$-divisor.
\end{enumerate}
 Then $H^i(X, \mcO_X(D))=0$ for all $i>0$.
	\end{theorem}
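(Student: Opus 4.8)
The plan is to reduce, by means of a minimal model program, to a situation on a minimal model where the divisor is \emph{nef}, apply there the Frobenius‑injection vanishing machinery, and then transport the vanishing back to $X$ by relative Kawamata–Viehweg vanishing. First I would dispose of the top cohomology and of the ground field. By Serre duality (Subsection \ref{sbs:serre-duality}) one has $H^2(X,\mcO_X(D))\cong H^0(X,\mcO_X(K_X-D))^*$; since $D-(K_X+\Delta)$ is nef and big and $\Delta\>0$, the class $D-K_X$ is big, so $K_X-D$ is not linearly equivalent to an effective divisor and $H^2=0$. For the field, if $k$ is finite it is perfect, so I would base change to $\bar k$: by Subsection \ref{sbs:perfect-base-change} and Lemma \ref{lem:positive-base-change} all hypotheses are preserved, and flat base change identifies the cohomology groups. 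This reduces everything to an infinite field $k$ (the only genuinely new case being $k$ infinite and imperfect), and it remains to prove $H^1(X,\mcO_X(D))=0$.

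Next I would run the MMP. By Kodaira's lemma I can absorb a small effective part of $D-(K_X+\Delta)$ into the boundary to obtain a klt pair $(X,\Delta')$ with $\Delta'\>\Delta$ and an effective ample $A\sim_\mbQ D-(K_X+\Delta')$ (Bertini over the infinite field, as in Lemma \ref{lem:mori-dream-space}). Then Proposition \ref{pro:running-mmp} applies: the $(\Delta'+A)$-MMP, which is simultaneously a $D$-MMP since $D\sim_\mbQ K_X+\Delta'+A$, yields a birational morphism $f\colon X\to Y$ onto a regular (resp.\ normal with canonical singularities) del Pezzo surface $Y$, with $f_*\mcO_X(D)\cong\mcO_Y(D_Y)$ for $D_Y:=f_*D$, and with $D_Y$ \emph{nef} because $Y$ is a minimal model. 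The whole point of passing to $Y$ is this nefness: on $X$ the class $D-K_X$ need not be nef, but on $Y$ the class $D_Y-K_Y=D_Y+(-K_Y)$ is a sum of a nef and an ample divisor, hence ample, in particular nef and big.

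To transfer cohomology from $Y$ back to $X$ I would use relative vanishing: since $(X,\Delta')$ is klt and $D-(K_X+\Delta')\sim_\mbQ A$ is ample and hence $f$-ample, relative Kawamata–Viehweg vanishing (as in \cite[Theorem 10.4]{Kol13}, already invoked in Corollary \ref{cor:non-vanishing}) gives $R^jf_*\mcO_X(D)=0$ for all $j>0$; together with $f_*\mcO_X(D)\cong\mcO_Y(D_Y)$ and the Leray spectral sequence this produces $H^i(X,\mcO_X(D))\cong H^i(Y,\mcO_Y(D_Y))$. It thus suffices to prove vanishing on $Y$. There $D_Y-K_Y$ is nef and big, so by non‑vanishing (Lemma \ref{lem:non-vanishing} in the regular case; Corollary \ref{cor:non-vanishing} applied to the ample Cartier $-K_Y$ together with $D_Y\>0$ in the normal case) I may choose an effective $N\>0$ with $N\sim_\mbQ D_Y-K_Y$, so that $D_Y\sim K_Y+N$. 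Lemma \ref{lem:nef-big-kv} then gives $H^1(Y,\mcO_Y(K_Y+p^eN))=0$ for all large divisible $e$, and the Frobenius‑injection argument of Proposition \ref{pro:injective-cohomology} upgrades this to $H^i(Y,\mcO_Y(K_Y+N))=H^i(Y,\mcO_Y(D_Y))=0$ for all $i>0$, whence $H^i(X,\mcO_X(D))=0$.

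The step I expect to be the main obstacle is the last one over an imperfect field. Proposition \ref{pro:injective-cohomology} is stated over algebraically closed fields, and for imperfect $k$ one cannot base change $Y$ to $\bar k$ without destroying normality/regularity of the del Pezzo surface, so the bootstrap must be run directly over $k$. Its only inputs, however, are $H^1(Y,\mcO_Y)=0$ — valid over imperfect fields by the Patakfalvi–Waldron Kodaira vanishing \cite[Theorem 1.9]{PW17} together with Serre duality — and the injectivity $H^1(\mcO_Y(-N))\injective H^1(\mcO_Y(-p^eN))$ arising from the (field‑agnostic) absolute Frobenius; so the argument should extend verbatim to any field of characteristic $p>3$. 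A secondary point requiring justification is the validity of relative Kawamata–Viehweg vanishing in characteristic $p$, but this is safe here since $f$ is birational between surfaces and $Y$ has canonical, hence rational, singularities.
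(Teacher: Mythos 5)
Your reductions up to the last step coincide with the paper's: Stein factorization and the finite-field case, perturbation plus MMP via Proposition \ref{pro:running-mmp}, relative vanishing to identify $H^i(X,\mcO_X(D))$ with $H^i(Y,\mcO_Y(D_Y))$, and then non-vanishing (Lemma \ref{lem:non-vanishing}, Corollary \ref{cor:non-vanishing}) together with Lemma \ref{lem:nef-big-kv} to set up the Frobenius bootstrap. The gap is in your final step, and it is genuine. You assert that the injectivity $H^1(Y,\mcO_Y(-N))\injective H^1(Y,\mcO_Y(-p^eN))$ of \cite[Lemma 3.2]{CT16jul} is ``field-agnostic'' and extends verbatim to imperfect $k$. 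It does not, at least not verbatim: the proof of that injectivity uses $H^1(Y,\mcO_Y)=0$ and $H^0(Y,\mcO_Y)=k$ to identify $H^1(Y,\mcO_Y(-N))$ with $H^0(N,\mcO_N)/k$ via $0\to\mcO_Y(-N)\to\mcO_Y\to\mcO_N\to 0$; a class $\bar f$ in the kernel of the $e$-th Frobenius map then satisfies $f^{p^e}\equiv c \pmod{\mcO_Y(-p^eN)}$ for some constant $c\in k$, and to conclude that $\bar f$ is constant one writes $c=a^{p^e}$ and uses that $(f-a)^{p^e}\in\mcO_Y(-p^eN)$ forces $f-a\in\mcO_Y(-N)$. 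Extracting the $p^e$-th root $a$ requires $k$ to be perfect. Over an imperfect field $c$ may have no $p^e$-th root in $k$, and the kernel can a priori contain classes arising from purely inseparable ``new constants'' in $H^0(\mcO_N)$; nothing in your argument rules this out. This is precisely the delicate point of the whole theorem, not a formality one can wave through.

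Moreover, your stated reason for refusing to base change --- ``for imperfect $k$ one cannot base change $Y$ to $\bar k$ without destroying normality/regularity'' --- is where you miss the paper's key ingredient, and the paper does exactly the opposite of what you propose. By \cite[Corollary 1.4]{PW17}, a normal Gorenstein del Pezzo surface with $H^0(Y,\mcO_Y)=k$ in characteristic $p>3$ is \emph{geometrically normal}. Hence $Y_{\bar k}$ is again a normal del Pezzo surface (regularity may indeed be lost, but Proposition \ref{pro:injective-cohomology} only needs normality and $-K$ ample Cartier); the remaining hypotheses ($D_{\bar k}-K_{Y_{\bar k}}$ nef, big and effective, and the vanishing from Lemma \ref{lem:nef-big-kv}) transfer by Lemma \ref{lem:positive-base-change} and flat base change, so Proposition \ref{pro:injective-cohomology} can legitimately be applied over $\bar k$, and the conclusion descends by flatness of $k\to\bar k$. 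This detour through geometric normality is exactly how the restriction $p>3$ enters a second time, beyond Kodaira vanishing, and it is what your proposal is missing: as written, the imperfect-field case --- the actual content of the theorem --- is left unproven.
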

	
\begin{proof}
Since $X$ is projective over $k$, we have a Stein factorization $X\to \Spec k'\to\Spec k$. Then $k'$ is a finite algebraic extension of $k$. Note that proving the $k$-vector space $H^i(X, \mcO_X(D))$ is zero is same as proving it is zero as a $k'$-vector space. Therefore replacing $k$ by $k'$ we may assume that $H^0(X, \mcO_X)=k$, i.e. $k$ is algebraically closed inside $K(X)$.\\
	
	Next we reduce the problem to the case where $k$ is an infinite field. If $k$ is a finite field, then $k\cong \mbF_{p^e}$ for some integer $e>0$. In particular, $k$ is a perfect field in this case. Note that the algebraic closure of $k$ is: $\bar{k}\cong\overline{\mbF}_p$. Let $X_{\bar{k}}$ be the base-change of $X$ to the algebraic closure $\bar{k}$. Then $X_{\bar{k}}$ is smooth (resp. normal) surface over $\bar{k}$ and $(X_{\bar{k}}, \Delta_{\bar{k}})$ is KLT (see Subsection \ref{sbs:perfect-base-change}). By Lemma \ref{lem:positive-base-change} we also have: $D_{\bar{k}}-(K_{X_{\bar{k}}}+\Delta_{\bar{k}})$ is nef and big, and $-K_{X_{\bar{k}}}$ is an ample Cartier divisor on $X_{\bar{k}}$ (and $D_{\bar{k}}\>0$ is effective in the normal del Pezzo case). Furthermore, by \cite[Proposition 9.3]{Har77} we get 
\begin{equation}\label{eqn:cohomology-base-change}
	H^i(X_{\bar{k}}, \mcO_{X_{\bar{k}}}(D_{\bar{k}}))=H^i(X, \mcO_X(D))\otimes_k\bar{k}.
\end{equation} 	
Therefore $H^i(X, \mcO_X(D))=0$ if and only if $H^i(X_{\bar{k}}, \mcO_{X_{\bar{k}}}(D_{\bar{k}}))=0$. In particular, we may assume that the ground field $k$ is infinite.\\
Note that $X$ is $\mbQ$-factorial (see Subsection \ref{sbs:klt-surface}). By perturbing $\Delta$ we may assume that $D-(K_X+\Delta)$ is ample. Choose $0\<A\sim_\mbQ D-(K_X+\Delta)$. Since $-K_X$ is ample, by Lemma \ref{lem:mori-dream-space} $X$ is a Mori dream space.  We run a $(\Delta+A)$-MMP and terminate with a minimal model $f:X\to Y$ as in the hypothesis of Proposition \ref{pro:running-mmp}. Then from the same proposition it follows that $Y$ is a regular (resp. normal) del Pezzo surface (with canonical singularities). Furthermore, by the relative Kawamata-Viehweg vanishing theorem for surfaces \cite[Theorem 1.3]{Tan18} we have $R^if_*\mcO_X(D)=0$ for all $i>0$. Therefore
\[
	H^i(X, \mcO_X(D))\cong H^i(Y, f_*\mcO_X(D))\cong H^i(Y, \mcO_Y(f_*D)),
\]  
for all $i>0$. The second isomorphism follows from Proposition \ref{pro:running-mmp}.\\
Thus after replacing $X$ by $Y$ we may assume that $\Delta+A$ is nef (and $X$ has canonical singularities). Then $D-K_X\sim_\mbQ \Delta+A$ is a nef and big $\mbZ$-divisor on $X$. If $X$ is a regular del Pezzo surface, then by Lemma \ref{lem:non-vanishing} $H^0(X, \mcO_X(D-K_X))\neq 0$. If $X$ is a normal del Pezzo surface (with canonical singularities), then by Corollary \ref{cor:non-vanishing} $H^0(X, \mcO_X(-K_X))\neq 0$. In this case from our hypothesis we also know that $D$ is effective, therefore we again have $H^0(X, \mcO_X(D-K_X))\neq 0$. By Lemma \ref{lem:nef-big-kv} we also have, $H^1(X, K_X+p^e(D-K_X))=0$ for all $e>0$ sufficiently large and divisible. 

 Now we reduce the problem to the algebraically closed base field. Since $X$ is a normal Gorenstein surface (see Remark \ref{rmk:gorenstein}), $-K_X$ is an ample Cartier divisor, $H^0(X, \mcO_X)=k$ and characteristic $p>3$, by \cite[Theorem 1.5]{PW17}, $X$ is geometrically normal, i.e., the base-change $X_{\bar{k}}$ is normal. Then by Lemma \ref{lem:positive-base-change}, $-K_{X_{\bar{k}}}$ is an ample Cartier divisor, and $D_{\bar{k}}-K_{X_{\bar{k}}}$ is a nef and big $\mbZ$-divisor on $X_{\bar{k}}$. Moreover, using realtions like \eqref{eqn:cohomology-base-change} along with the results proved in the previous paragraph we get that $H^0(X_{\bar{k}}, \mcO_{X_{\bar{k}}}(D_{\bar{k}}-K_{X_{\bar{k}}}))\neq 0$ and $H^1(X_{\bar{k}}, \mcO_{X_{\bar{k}}}(K_{X_{\bar{k}}}+p^e(K_{X_{\bar{k}}}-D_{\bar{k}})))=0$ for all $e>0$ sufficiently large and divisible. Combining all of these with Proposition \ref{pro:injective-cohomology} we have $H^i(X_{\bar{k}}, \mcO_{X_{\bar{k}}}(D_{\bar{k}}))=0$ for all $i>0$. Then again by a similar relation as in \eqref{eqn:cohomology-base-change} we get that, $H^i(X, \mcO_X(D))=0$ for all $i>0$.

\end{proof}

\bibliographystyle{habbrv}
\bibliography{references.bib}

\end{document}